\numberwithin{equation}{section}
\newtheorem{theorem}[equation]{Theorem}
\newtheorem{lemma}[equation]{Lemma}
\newtheorem{proposition}[equation]{Proposition}
\newtheorem{corollary}[equation]{Corollary}
\def\Hom{\operatorname{Hom}}
\newcommand{\bA}{\mathbb{A}}
\newcommand{\bH}{\mathbb{H}}
\newcommand{\bR}{\mathbb{R}}
\newcommand{\bQ}{\mathbb{Q}}
\newcommand{\bZ}{\mathbb{Z}}
\newcommand{\bC}{\mathbb{C}}
\newcommand{\GL}{\mathrm{GL}}
\newcommand{\SL}{\mathrm{SL}}
\newcommand{\Sp}{\mathrm{Sp}}
\newcommand{\SO}{\mathrm{SO}}
\title[Refined Gan-Gross-Prasad conjecture for cusp forms of GSp(4)]{On the refined Gan-Gross-Prasad conjecture for cusp forms of GSp(4)}
\author{Jun Wen}  \subjclass[2010]{Primary 11F67; Secondary 11S90.} \keywords{Periods, central L-values, prehomogeneous vector spaces} \email{jwen@math.umass.edu} \address{Department of Mathematics and Statistics, University of Massachusetts Amherst}
\begin{document}
\bibliographystyle{alpha}
\maketitle

\begin{abstract}
We prove a conjectural formula relating the Bessel period of certain automorphic forms on $\mathrm{GSp}_4$ to a central $L$-value. This formula is proposed by Liu \cite{liu} as the refined Gan-Gross-Prasad 
conjecture for the groups $(\SO(5), \SO(2))$. The conjecture has been previously proved for certain automorphic forms on $\mathrm{GSp_4}$ from lifts.  In this paper, we extend the formula to Siegel modular forms 
of $\Sp_4(\bZ)$.
\end{abstract}

\section{Overview}
\subsection{Introduction} 
In this paper, as a sequel of \cite{junwen15}, we extend the Waldspurger's theorem on the toric period of automorphic forms on $\GL_2$ to the Bessel period of automorphic forms of $\mathrm{GSp}_4$. In their original form,  Gross and Prasad in \cite{grp1} and \cite{grp2} gave a series of conjectures on the restriction of an automorphic representation of a special 
orthogonal group to smaller special orthogonal group. Later on in \cite{gangrossprasad} the conjecture was extended to all classical groups. Recently Ichino and Ikeda \cite{ichinoikeda} formulated a refinement of the
Gan-Gross-Prasad conjecture for the orthogonal groups $\SO_{n+1} \times \SO_n$. 

There is much progress towards the global case of Gan-Gross-Prasad conjecture on the unitary groups. Using the relative trace formula developed by Jacquet-Rallis \cite{jacquetrallis}, W.Zhang proved the conjecture
for the unitary groups $\mathrm{U}_{n+1} \times \mathrm{U}_n$ under some local conditions \cite{zhangwei}.
Although the Jacquet-Rallis trace formula has been successful for the unitary groups, there is still no strategy works in all cases for the orthogonal groups $\SO_{n+1} \times \SO_n$. 
The $n=2$ case is the original Waldspurger's formula \cite{waldspurger} and $n=3$ case corresponds to the triple product formula \cite{ichino}. Gan and Ichino proved the case of $n=4$ in \cite{ganichino} still using the theta correspondence. But little is known beyond this. 

A recent work of Liu \cite{liu} offered a precise conjectural formula for the Bessel period of automorphic forms in terms of the central values of certain $L$-functions.
Liu gave a proof of the conjecture in the special case of Yoshida lifts. The proof of in the case of non-endoscopic Yoshida lifts has been given by \cite{corbett}.

In \cite{junwen15} the author developed a new method to prove the classical Waldspurger's theorem: the key point is to exploit the distributions on certain prehomogeneous vector spaces acted by $\GL_2$.
In this paper, we consider the same prehomogeneous vector spaces with group action replaced by $\mathrm{GSp}_4$. 

\subsection{Statement of the result}
Let $F$ be a number field with ad$\grave{\mathrm{e}}$le ring $\bA = \bA_F$. Let $W$ be the standard symplectic space over $F$ of dimension $4$. Let $W_1 = \mathrm{Span}\{e_1, e_2\}$ and $W_1^{\vee} = \mathrm{Span}\{e^{\vee}_1, e^{\vee}_2\}$ which form a complete polarization with $\langle e_i, e^{\vee}_j \rangle = \delta_{ij}$. Let $G = \mathrm{GSp}_4$. Denote by $P$ the Siegel parabolic subgroup stabilizing $W_1$, which has the Levi
decomposition $P = M U$. Then $M \cong \GL_2 \times F^{\times}$ and $U \cong \mathrm{M}_2^{\mathrm{Sym}}$. For $S \in \mathrm{M}_2^{\mathrm{Sym}}$ with $\det(S) \neq 0$, define
\begin{align*}
T = \{ (g, \det(g)) \in M | {}^t g S g = \det(g) S \}.
\end{align*}
Then $S$ imposes a non-degenerate quadratic form $q_S$ on $W_1$. We have
\begin{align*}
T = \mathrm{GSO}(W_1) \cong \mathrm{Res}^K_F K^{\times},
\end{align*}
where $K$ is the discriminant quadratic algebra. Define the Bessel subgroup $R = U \rtimes T$. 

Let $\chi$ be the unitary Hecke character of $\bA_{K}^{\times}$, and let $\pi$ be an irreducible, cuspidal automorphic representation of $\mathrm{GSp}_4(\bA)$ in the space of cusp forms $\mathcal{V}_{\pi}$. Impose the central character condition $\pi \boxtimes \chi|_{\bA^{\times}} = 1$. Given an additive character $\psi: F \backslash \bA \to \bC^{\times}$, for an element $S  \in \mathrm{M}_2^{\mathrm{Sym}}$, we define an automorphic character $\theta_S$ of $N$ by the formula
\begin{align*}
\theta_S(X) =  \psi (\mathrm{tr}(SX)) : U(F) \backslash U(\bA) \to \bC^{\times}. 
\end{align*} 
We then define the Bessel period of $f \in \pi$ to be the integral
\begin{align*}
\mathcal{P}(f, \chi) = \int_{\bA^{\times} T(\bQ) \backslash T(\bA) } \int_{U(\bQ) \backslash U(\bA) } f(ug) \chi(g) \theta_S^{-1}(u) du dg. 
\end{align*}  
This integral defines an element of $\Hom_{R(\bA)} (\pi\boxtimes \chi \otimes \theta_S^{-1} , \bC)$. Put $\mathcal{V}_{\bar{\pi}} = \{ \bar{f} | f \in \mathcal{V}_{\pi} \}$, which forms the space of contragredient representation $\bar{\pi}$ of $\pi$. Then we have a canonical pairing 
\begin{align*}
\mathcal{B}_{\pi}: \mathcal{V}_{\pi} \otimes \mathcal{V}_{\bar{\pi}} \to \bC
\end{align*}
defined by the Petersson inner product 
\begin{align*}
\mathcal{B}_{\pi}(f_1, f_2) =\int_{Z(\bA) G(F) \backslash G(\bA) } f_1(g) f_2(g) dg. 
\end{align*}

Now suppose that $\pi$ decompose as $\otimes_v \pi_v$ into admissible representations over local field. Assume $f = \otimes f_{v} \in \mathcal{V}_{\pi}$ and 
$\tilde{f} = \otimes \tilde{f}_{v} \in \mathcal{V}_{\bar{\pi}}$ , then for each place $v$ of $F$, we define the integral of matrix coefficients
\begin{align*}
\alpha(f_v, \tilde{f}_{v}; \chi) = \int_{F^{\times} \backslash T} \int_{U} \mathcal{B}_{\pi, v} (\pi(gu) f_v, \tilde{f}_v) \chi(g)  \theta_S^{-1}(u)   dg du. 
\end{align*}
In his paper, Liu \cite[Theorem 2.2]{liu} shows there exists a set of good places, for which the local integral is given as follows
\begin{align*}
\alpha(f_v, \tilde{f}_{v}; \chi) = \frac{\zeta_v(2) \zeta_v(4) L_v(\frac{1}{2}, \pi \boxtimes \chi)}{L_v(1, \pi, \mathrm{Ad}) L_v(1, \chi_{K/F})}. 
\end{align*}
Define the normalized local integral of matrix coefficients by
\begin{align*}
\alpha^{\natural}(f_{v}, \tilde{f}_{v}; \chi) = \left(\frac{\zeta_v(2) \zeta_v(4)L_v(\frac{1}{2}, \pi)}{L_v(1, \pi, \mathrm{Ad}) L_v(1, \chi_{K/F})} \right)^{-1}\alpha(f_{v}, \tilde{f}_{v}; \chi).
\end{align*}
Then our main result is the following

\begin{theorem}\label{main}
Let $F = \bQ$ and let the unitary Hecke character $\chi$ of $\bA_K^{\times}$ be trivial. Let $f$ be the adelization of a Siegel modular form of $\mathrm{Sp}_4(\bZ)$, then
\begin{align*}
\frac{|\mathcal{P}(f, \chi)|^2}{||f||^2 \mathrm{vol}(K^{\times} \bA^{\times} \backslash \bA^{\times}_K) }
 =  \frac{1}{8}  \frac{\zeta(2) \zeta(4)L(\frac{1}{2}, \pi)}{L(1,  \pi, \mathrm{Ad}) L(1, \chi_{K/F})} \prod_v \alpha^{\natural}(f_{v}, \tilde{f}_{v}; \chi).
\end{align*}
\end{theorem}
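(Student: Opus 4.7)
My plan is to extend the prehomogeneous-vector-space technique of \cite{junwen15} from $\mathrm{GL}_2$ to $\mathrm{GSp}_4$. In that earlier paper one attaches to a Schwartz--Bruhat function $\Phi$ on the prehomogeneous $\mathrm{GL}_2$-module $V = \mathrm{M}_2^{\mathrm{Sym}}$ the theta-type distribution
\begin{align*}
I(\Phi, f, \tilde f) \;=\; \int_{Z(\bA) G(F) \backslash G(\bA)} \Big( \sum_{\xi \in V(F)} \Phi(g \cdot \xi) \Big)\, f(g)\, \tilde f(g)\, dg,
\end{align*}
and evaluates it in two ways. In the present setting we keep the same vector space $V = \mathrm{M}_2^{\mathrm{Sym}}$ but let $G = \mathrm{GSp}_4$ act through its Siegel Levi $M \cong \mathrm{GL}_2 \times F^{\times}$; the open orbit is the locus of non-degenerate binary quadratic forms, and the generic stabilizer recovers the group $T$ of the Bessel subgroup.

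\emph{Geometric side.} Unfolding $I(\Phi, f, \tilde f)$ along the stratification of $V(F)$ by $G$-orbits, the open orbit collapses, after combining the integration variables with a Fourier kernel $\theta_S$ built into $\Phi$, into $|\mathcal{P}(f,\chi)|^2$ times a product of local matrix-coefficient integrals $\alpha(f_v, \tilde f_v; \chi)$. Cuspidality of $f$ either annihilates or makes explicit the contributions from the lower-dimensional orbits.

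\emph{Spectral side.} Applying Poisson summation on $V(\bA)$ to $\sum_{\xi \in V(F)} \Phi(g\cdot \xi)$ and integrating against $f \tilde f$ recasts $I(\Phi, f, \tilde f)$ as a global zeta integral, which one identifies with a Rankin--Selberg-type integral representation of the relevant $L$-functions of $\pi$ evaluated at the central point. Combined with the Petersson inner product normalization this produces the global $L$-ratio $\zeta(2)\zeta(4) L(\tfrac{1}{2},\pi)/L(1,\pi,\mathrm{Ad}) L(1,\chi_{K/F})$.

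Equating the two evaluations place by place completes the argument. At every finite prime $f$ is spherical (level $\mathrm{Sp}_4(\bZ)$) and $\chi$ is trivial, so Liu's Theorem~2.2 supplies the local factor in closed form and $\alpha^{\natural}(f_v, \tilde f_v; \chi)$ reduces to an explicit value (equal to $1$ outside of finitely many $v$). At the archimedean place one must compute explicitly an integral of matrix coefficients of the holomorphic discrete series of $\mathrm{Sp}_4(\bR)$ against a Schwartz datum matched to the weight of the Siegel form; the rational constant $\tfrac{1}{8}$ is produced here, together with the volume normalization on $\bA^{\times} T(\bQ) \backslash T(\bA)$. The main obstacles I foresee are this archimedean calculation and the control of boundary-orbit terms in the unfolding: the orbit structure of $\mathrm{M}_2^{\mathrm{Sym}}$ under the Siegel Levi of $\mathrm{GSp}_4$ is richer than in the $\mathrm{GL}_2$ picture of \cite{junwen15}, so a regularization is likely needed to meromorphically continue the spectral integral to the central point.
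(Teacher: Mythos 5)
Your proposal departs substantially from the paper's actual strategy, and I think there is a genuine structural gap in the geometric side of your argument. You set up a single theta-type distribution
\begin{align*}
I(\Phi, f, \tilde f) = \int_{Z(\bA) G(F)\backslash G(\bA)} \Big(\sum_{\xi \in V(F)}\Phi(g\cdot\xi)\Big) f(g)\tilde f(g)\,dg
\end{align*}
on $V = \mathrm{M}_2^{\mathrm{Sym}}$ and assert that unfolding over the open orbit (together with a Fourier twist) produces $|\mathcal{P}(f,\chi)|^2$. But $\mathrm{M}_2^{\mathrm{Sym}}$ under the Siegel Levi parametrizes single binary quadratic forms; the open-orbit stabilizer is a single copy of $T$, and the unfolded integral collapses to a single Bessel-type integral, i.e.\ something linear in the period, not its square. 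To manufacture $|\mathcal{P}|^2$ one needs a ``doubled'' structure on the geometric side, and this is precisely what the paper engineers: it replaces your $\mathrm{M}_2^{\mathrm{Sym}}$ by the PVS of $2\times2\times2$ boxes $\bZ^2\otimes\bZ^2\otimes\bZ^2$ acted on by $\GL_2\times M\times M$, where $M$ is identified with the Siegel Levi, and the test distribution $J(f,\phi)$ is weighted by a \emph{product} of two Fourier coefficients $\Phi_f^{S^2_v}(g_2)\,\overline{\Phi_f^{S^3_v}(g_3)}$ attached to the two quadratic forms $Q^2_v, Q^3_v$ of the box $v$. The indispensable input here is Bhargava's composition law: the integral orbits of the cube PVS are in bijection with \emph{pairs} of strict ideal classes (equivalently, pairs of Heegner points), and it is exactly this pairing that produces $|\mathcal{P}(\Phi_f,1)|^2$ in Theorem~\ref{thm:periods}. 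Your plan contains no analogue of this doubling and so cannot reach the left-hand side of the main formula as stated.

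On the spectral side the discrepancy is just as significant. You propose to apply Poisson summation on $V(\bA)=\mathrm{M}_2^{\mathrm{Sym}}(\bA)$ and identify the result with a Rankin--Selberg zeta integral for $L(\tfrac12,\pi)$. The paper instead fuses the cube PVS into the larger PVS of pairs of quaternary alternating $2$-forms $\bZ^2\otimes\wedge^2\bZ^4$ (with a parabolic group $B^1\subset\SL_2\times\SL_4$ acting), builds a second Shintani integral $J^0(s_0,s_1,f,\psi)$ there, and shows (Theorem~\ref{thm:residue}) that its residue at $(s_0,s_1)=(2,1)$ \emph{recovers} $J(f,\phi)$. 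The $L$-value then enters through a separate evaluation of the same residue: because the rational orbit structure of the quaternary PVS is a single orbit with a Borel stabilizer, the unfolded orbital integral can be Euler-factored and matched to Liu's local Bessel integrals of matrix coefficients. This two-PVS, residue-of-a-two-variable-Shintani-zeta mechanism has no counterpart in your plan, and I do not see how a direct Poisson summation on $\mathrm{M}_2^{\mathrm{Sym}}(\bA)$ would regularize to the central point or produce the Eulerian factorization. Finally, your attribution of the constant $\tfrac18$ purely to the archimedean matrix-coefficient integral does not match the paper, where it is assembled from the measure constants $c(J^0,J)=2w(D)^2/\pi^2$, the factor $4/w(D)^2$ in Theorem~\ref{thm:periods}, the residue $\tfrac12$ of the Eisenstein series, and the volume ratio $\zeta(4)$. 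In short: the correct plan requires the Bhargava cube PVS to double the period, the skew-symmetrized PVS to produce the $L$-function, and the residue link between the two zeta functions; none of these three ideas appears in your sketch, so as written it would not close.
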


It should be mentioned our result provides a proof of B$\ddot{\mathrm{o}}$cherer's conjecture \cite{bocherer} with trivial Hecke character over rational field. Recall the B$\ddot{\mathrm{o}}$cherer's conjecture formulates an equality between 
sums of Fourier coefficients of Siegle modular forms and certain $L$-values. If automorphic form in question is the adelizaton of Siegel modular forms, then the Bessel period turns out to be precisely the Fourier coefficients that B$\ddot{\mathrm{o}}$cherer considered.  

\subsection{Idea of the proof}
In \cite{junwen15}, the author generalized the Shintani's zeta functions to the higher rank automorphic forms. 
The Shintani's zeta functions associated to prehomogeneous vector spaces (PVS) was first introduced in \cite{satoshintani}. 
They are usually defined as Dirichlect series in one variable with coefficients encoding the class numbers of certain field extensions. 
Studying the functional equations of these zeta functions, Shintani in \cite{shintani}, \cite{shintani2} obtained the average value of class number of binary quadratic forms and binary cubic forms respectively. 
The key tool in Shintani's proof of the analytic continuation of these zeta functions is the Poisson summation formula. 
Later on, the adelization of Shintani's zeta function was treated in a few cases (\cite{wright}, \cite{wright2}). 
The recent breakthroughs of M.Bhargava in counting quartic and quintic number fields came from a detailed study of integral orbits of certain PVS. In the first of his series paper on ``Higher composition laws" \cite{bhargava1}, \cite{bhargava2},
\cite{bhargava3} and \cite{bhargava4} he focused on the space $V_{\bZ} = \bZ^2 \otimes \bZ^2 \otimes \bZ^2$ acted on by $\SL_2(\bZ) \times \SL_2(\bZ) \times \SL_2(\bZ)$. Inspired by the Shintani's function in two variables studied in \cite{shintani}, we 
in \cite{junwen14} considered the action of the subgroup $B_2(\bZ) \times B_2(\bZ) \times \SL_2(\bZ)$, where $B_2(\bZ)$ denotes the group of integral upper-triangle matrices of $\SL_2(\bZ)$. It was shown that the zeta function constructed has three variables and coincides with the $A_3$ quadratic Weyl group multiple Dirichlect series. In \cite{junwen15}, we studied the pairs of quaternary alternating $2$-forms $\bZ^2 \otimes \wedge^2 \bZ^4$ acted by
$\SL_2(\bZ) \times \SL_4(\bZ)$, that is obtained from $\bZ^2 \otimes \bZ^2 \otimes \bZ^2$ via the operation of skew-symmetrizing. Then we define distributions associated to respective PVS. The one associated to the PVS of $2\times 2 \times 2$ matrices gives rise to the product of the sum of automorphic form valued at the Heegner points. The other one associated to the PVS of quaternary alternating $2$-forms gives rise to the central value of base change L-function of the automorphic form. The two distributions are related via the Poisson summation formula.  In this paper we will apply the same strategy.

The first step of our proof is to realize the period integrals of an automorphic form as the distribution associated to a certain PVS. 
Inspired by the Bhargava's work on the extension of Gauss's composition law, we consider the space of $2 \times 2\times 2$ boxes.
One of Bhargava's achievements is the determination of the corresponding integral orbits, i.e. the determination of the $\SL_2(\bZ)^3$-orbits on $\bZ^2 \otimes \bZ^2 \otimes \bZ^2$. In particular, he discovered that in this case, the generic integral orbits are in bijection with isomorphism classes of tuples $(A, I_1, I_2, I_3)$ where 
\begin{itemize}
\item[(a)] $A$ is an order in an \'{e}tale quadratic $\bQ$-algebra; 
\item[(b)] $I_1,I_2$ and $I_3$ are elements in the narrow class group of $A$ such that $I_1 \cdot I_2 \cdot I_3 =1$.
\end{itemize}
Forgetting the group structure, Bhargava's composition law states that its integral orbits are in one to one correspondent to the pairs of (strict) class groups of binary quadratic forms. This motivates us to define a distribution on the 
real space of $2 \times 2\times 2$ boxes and show it gives rise to the product of the sum of automorphic form valued at the Heegner points. 

The next step is to relate this integral to the special values of L-functions of automorphic forms. The key point here is to study a certain parabolic group action on the  skew-symmetrization of $2 \times 2\times 2$ boxes, i.e., the PVS of pairs of quaternary alternating $2$-forms.  We will also define a distribution on this PVS and apply the Poisson summation formula to show there is a simple pole of the distribution and its residue gives rise to the distribution on the PVS of  $2 \times 2\times 2$ boxes defined earlier. In some sense, the set of $2 \times 2\times 2$ boxes can be viewed as the singular set of  PVS of pairs of quaternary alternating $2$-forms.

The last step is to study the adelic version of the distribution on PVS of pairs of quaternary alternating $2$-forms. The reason for this consideration is that there is only one rational orbit of this PVS; moreover, the rational stabilizer group of 
a representative is the unipotent subgroup of $\GL_2$ and the adelic stabilizer group is a $\GL_2$. This means integrating along the rational stabilizer group, after the Rankin-Selberg method, becomes an Euler product. After carefully analyzing the local orbit integrals at unramified places via the local orbits counting, we show each of these local integrals is related to the Fourier coefficients of matrix coefficients. Thanks to the work of Ichino-Ikeda \cite[Theorem 1.2]{ichinoikeda} and Liu \cite[Theorem 2.2]{liu} on the study of Whittaker-Shintani functions, the Fourier coefficients of matrix coefficients at non-archimedean places provide the local products of central value of L-function of $\mathrm{GSp}_4$.


\section{Introduction}
Let $F$ be a number field with ad$\grave{\mathrm{e}}$le ring $\bA =\bA_F$. Let $W = F^4$ and endow $W$ with an antisymmetric bilinear form so that $W$ becomes a symplectic vector space over $F$.
Recall the group $\mathrm{GSp}_4(F)$ is defined by
\begin{align*}
\mathrm{GSp}_4(F) = \left\{ g \in \GL(W) : (gu, gv) = \mu(g) (u, v),  u, v \in W \right\}.
\end{align*}
Let $W_1 = F^2$ then $W= W_1 \oplus W_1^{\vee}$.

Throughout this paper we assume $F = \bQ$. Let $f$ be a Siegel cusp form of degree 2 and weight $k$ for the group $\Gamma_2 = \Sp_4(\bZ)$. Hence for all $\gamma \in \mathrm{Sp}_4(\bZ)$, we have $f|_{k}\gamma = f$, where
\begin{align*}
(f|_{k }\gamma)(Z) = \mu(g)^k j(g, Z)^{-k} f(gZ). 
\end{align*}
for $g \in \mathrm{GSp}_4(\bR)^{+}$ and $Z \in \bH_2$. 
Here $j(g, Z) = \det(CZ + D)$ for $g = \left(   \begin{matrix}  A & B \\
C & D \end{matrix}  \right) \in \mathrm{GSp}_4(\bR)^{+}$.
The Fourier expansion of $f$  is given by
\begin{align*}
f(Z) = \sum_{S \ \mathrm{positive}} a(f, S) e^{2 \pi i \mathrm{Tr}(SZ)},
\end{align*}
where the matrices $S$ are of the form
\begin{align*}
S = \left(  \begin{matrix}
a & b/2 \\
b/2 & c
\end{matrix} \right), \ \ a, b, c \in \bZ, \ \  a > 0, \ \ \mathrm{disc}(S) = b^2 - 4 ac < 0.
\end{align*}
For $S = \left(  \begin{matrix}
a & b/2 \\
b/2 & c
\end{matrix} \right) \in \rm{M}_2^{\mathrm{sym}} (\bQ)$ such that $b^2 - 4 ac  = D < 0$, put $K = \bQ(\sqrt{D})$, let $\mathrm{Cl}_K$ denote the ideal class group of $K$ and $h_K$ denote the class number of quadratic field extension.
Let $\chi_{K / \bQ}$ denote the quadratic character
associated to the extension $K / \bQ$.
We have the number of root of unity
\begin{align*}
w(D) = \begin{cases} 
4  \ \mathrm{if} \ D = 4,\\
6 \ \mathrm{if} \ D = 3,\\
2 \ \mathrm{otherwise}.
\end{cases}
\end{align*}
Let $\pi = \otimes \pi_v$ be an irreducible, unitary, cuspidal, automorphic representation of $\mathrm{GSp}_4(\bA)$ with trivial central character and $f \in  \pi_{\infty}$. 
Write $g \in \mathrm{GSp}_4(\bA)$ as $g = g_{\bQ} g_{\infty} g_0$ with $g_{\bQ} \in \mathrm{GSp}_4(\bQ), g_{\infty} \in \mathrm{GSp}_4(\bR)^{+}, g_0 \in K_0$, where $K_0 = \prod_{p < \infty} \mathrm{GSp}_4(\bZ_p)$.
Denote the adelization of $f$ by $\Phi_f : \mathrm{GSp}_4(\bA)\to \bC$ that is defined by
\begin{align*}
\Phi_f (g)  = (f|_kg_{\infty})(i 1_2).
\end{align*}

A maximal, non-split torus in $\GL_2(\bQ)$ is given by the orthogonal group 
\begin{align*}
T = \left\{ g \in \GL_2(\bQ) : {}^tgS g = \det(g) S \right\} = \mathrm{GSO}(W_1).
\end{align*}
In fact, 
\begin{align*}
T(\bQ)  = \left\{ x  + y  \left( \begin{matrix}
b/2 & c \\
-a & -b/2
\end{matrix} \right) : x, y \in \bQ \right \}^{\times}.
\end{align*}
Then the isomorphism $T(\bQ) \cong \bQ(\sqrt{D})^{\times}$ is given by
\begin{align*}
x  + y  \left( \begin{matrix}
b/2 & c \\
-a & -b/2
\end{matrix} \right) \mapsto x + y \frac{\sqrt{D}}{2}. 
\end{align*}
Let $U$ be the unipotent radical 
\begin{align*}
U  = \left\{   \left( \begin{matrix}
1_2 & X \\
0 & 1_2
\end{matrix} \right)  : X \in \mathrm{M}_2^{\mathrm{sym}}  \right\}. 
\end{align*}
There is an embedding $T \to \mathrm{GSp}(W)$ given by mapping $g \in T$ to 
\begin{align*}
 \left( \begin{matrix}
g  & \\
 & \det(g)\cdot  {}^tg^{-1}
\end{matrix} \right) \in \mathrm{GSp}(W).
\end{align*}
Define the Bessel group to be the semidirect product $R = U \rtimes T$. 

Let $P = MU$ be the Siegel parabolic subgroup of $\mathrm{GSp}_4$. By the Iwasawa decomposition, $\mathrm{GSp}_4(\bA) = U(\bA) M(\bA) K_{\infty} K_0$, where $K_{\infty} \cong U_2(\bR)$ is the maximal compact subgroup of $\mathrm{Sp}_4(\bR)$.  
Let $e$ be a character $\bQ \backslash \bA \to \bC^{\times}$ such that $e(x) = e^{ 2 \pi i x}$ for $x\in \bR$ and $e(x) =1$ for $x \in \bZ_p$. Let $\theta_S:  U(\bQ) \backslash U(\bA) \to \bC^{\times}$ be the character given by
\begin{align*}
\theta_S \left(   \left( \begin{matrix}
1_2 & X \\
0 & 1_2
\end{matrix} \right)  \right) = e(\mathrm{Tr}(SX)). 
\end{align*}
Let $g = u  m k_{\infty} k_0$, with $ u \in U(\bA), m \in M(\bA), k_{\infty} \in K_{\infty}$ and $k_0 \in K_0$.
Let $m= m_{\bQ} m_{\infty} m_0$, with $m_{\bQ} \in M(\bQ), m_{\infty} \in M^{+}(\bR)$ and $m_0 \in M(\bA) \cap K_0$. 
Denote $Z = m_{\infty} \langle i 1_2 \rangle$. Let $S \in \mathrm{M}_2^{\mathrm{sym}}(\bQ)$ be non-degenerate, and let $S' = v {}^t A S A$.  We define the Fourier coefficient of $\Phi_f$, 
\begin{align*}
\Phi_f^S(g) = \int_{U(\bQ) \backslash U(\bA) } \Phi_f(ug) \theta_S^{-1}(u)  du \ \ \mathrm{for} \ g \in \mathrm{GSp}_4(\bA).
\end{align*}
Then, according to \cite[Proposition 3.1]{pitalesahaschmidt}, 
\begin{align*}
\Phi_f^{S}(g) = \theta_S(n_0)  \mu(m_{\infty}^k) j(g_{\infty}, i 1_2)^{-k} a(f, S') e^{2 \pi i \mathrm{Tr}(S'Z)}. 
\end{align*}
In fact, we have
\begin{lemma}
For $S \in \mathrm{M}_2^{\mathrm{sym}}(\bQ)$ and $S' = v {}^t A S A$, with $A \in \GL_2(\bQ)$ and $v \in \bQ^{\times}$, we have
\begin{align*}
\Phi_f^{S'}(g) = \Phi_f^S( \left(  \begin{matrix} A & \\
 & v^{-1} {}^t A^{-1}   \end{matrix}  \right)  g ) \ \ \mathrm{for} \ g \in \mathrm{GSp}_4(\bA).
\end{align*}
\end{lemma}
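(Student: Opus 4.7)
The plan is to realize the claimed identity as a change-of-variables manipulation in the unipotent integral defining the Fourier coefficient, using the conjugation action of the Levi element $m=\begin{pmatrix} A & \\ & v^{-1}{}^tA^{-1}\end{pmatrix}$ on $U$ together with the left $\mathrm{GSp}_4(\bQ)$-invariance of $\Phi_f$.

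First I would verify that $m\in M(\bQ)\subset\mathrm{GSp}_4(\bQ)$. A direct computation with the standard symplectic form shows that a block-diagonal element $\mathrm{diag}(A,D)$ lies in $\mathrm{GSp}_4$ with similitude $\mu$ if and only if $D=\mu\,{}^tA^{-1}$; our $m$ therefore has similitude factor $v^{-1}$ and is rational.

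Next I would compute the conjugation action of $m$ on $U$: for $u(X)=\begin{pmatrix} 1_2 & X \\ 0 & 1_2\end{pmatrix}$, a matrix multiplication gives $m\,u(X)\,m^{-1}=u(v\,AX\,{}^tA)$. Consequently,
\begin{align*}
\theta_S\bigl(m\,u(X)\,m^{-1}\bigr)=e\bigl(\mathrm{Tr}(S\cdot vAX{}^tA)\bigr)=e\bigl(\mathrm{Tr}(v\,{}^tASA\cdot X)\bigr)=\theta_{S'}\bigl(u(X)\bigr),
\end{align*}
so the characters $\theta_S$ and $\theta_{S'}$ are intertwined by $\mathrm{Ad}(m)$.

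Now I would start from the right-hand side and rewrite
\begin{align*}
\Phi_f^S\!\left(\begin{pmatrix}A & \\ & v^{-1}{}^tA^{-1}\end{pmatrix}g\right)=\int_{U(\bQ)\backslash U(\bA)}\Phi_f(u\,mg)\,\theta_S^{-1}(u)\,du,
\end{align*}
and substitute $u=mu'm^{-1}$. Since $m$ is rational, conjugation by $m$ preserves $U(\bQ)\backslash U(\bA)$ and the Haar measure (the product of local Jacobians is $1$). This yields
\begin{align*}
\int_{U(\bQ)\backslash U(\bA)}\Phi_f(m u' g)\,\theta_S^{-1}(mu'm^{-1})\,du'=\int_{U(\bQ)\backslash U(\bA)}\Phi_f(u'g)\,\theta_{S'}^{-1}(u')\,du'=\Phi_f^{S'}(g),
\end{align*}
where I used the intertwining of characters computed above and the left $\mathrm{GSp}_4(\bQ)$-invariance of $\Phi_f$ to drop the leading $m$.

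There is no real obstacle here; the only points needing care are the verification that the similitude condition places $m$ in $\mathrm{GSp}_4(\bQ)$ (so that automorphic invariance applies) and the transformation law $S\mapsto v\,{}^tASA$ of the character under $\mathrm{Ad}(m)$, both of which are direct calculations.
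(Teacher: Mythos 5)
Your proof is correct, and it is the natural argument: the paper states this lemma without proof (it is a direct consequence of the definition of $\Phi_f^S$ and automorphic invariance), and what you have written out — checking $m\in\mathrm{GSp}_4(\bQ)$ with similitude $v^{-1}$, computing $m\,u(X)\,m^{-1}=u(vAX{}^tA)$, using $\mathrm{Tr}(S\cdot vAX{}^tA)=\mathrm{Tr}(v\,{}^tASA\cdot X)$, and then the measure-preserving change of variables on $U(\bQ)\backslash U(\bA)$ together with left $\mathrm{GSp}_4(\bQ)$-invariance — is exactly the calculation the paper is implicitly invoking.
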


Let $\pi$ be an automorphic representation of $\mathrm{GSp}_4(\bA)$. Let $\chi$ be a unitary Hecke character of $\bA_K^{\times}$, that can be thought as the automorphic representation of the abelian group
$T(\bA)$ 
\begin{align*}
\chi: T(\bQ) \backslash T(\bA) \to \bC^{\times}.
\end{align*}
Impose the central character condition that $\pi\otimes \chi|_{\bA^{\times}} = 1$. 
For $f \in \pi$, the Bessel period of $f$ with respect to $\chi$ is defined to be the period integral 
\begin{align*}
\mathcal{P}(f, \chi) = \int_{\bA^{\times} T(\bQ) \backslash T(\bA) } \int_{U(\bQ) \backslash U(\bA) } f(ug) \chi(g) \theta_S^{-1}(u) du dg. 
\end{align*}  
This integral defines an element of $\Hom_{R(\bA)} (\pi\boxtimes \chi \otimes \theta_S^{-1} , \bC)$. 
The refined Gan-Gross-Prasad conjecture proposed by Liu states that 
\begin{align*}
|\mathcal{P}(f, \chi)|^2 \sim \frac{\zeta(2) \zeta(4) L\left( \frac{1}{2}, \pi\boxtimes \chi  \right)}{L\left( 1, \pi , \mathrm{Ad}\right) L(1, \chi_{K/F})  }.
\end{align*}

\section{Prehomogeneous vector spaces}
The proof relies on the construction of certain prehomogeneous vector spaces. Let $G$ be a connected complex Lie group, usually $G$ is a complexification of a real Lie group. A prehomogeneous vector space (PVS) $V$ of $G$, denoted by $(G, V)$, is a complex finite dimensional vector space $V$ together with a holomorphic representation of $G$, such that $G$ has an open dense orbit in $V$. Let $P$ be a complex polynomial function on $V$. We call it a relative invariant of $G$ if $P(g v) =\chi(g) P(v)$ for some rational character $\chi$ of $G$. 

\subsection{PVS of $2 \times 2 \times 2$ boxes}
Let  $V(\mathbb{\bZ})$ be the set of $2\times 2 \times 2$ integral cubes. For each element $A \in V(\bZ)$, there are three ways to form pairs of $2 \times 2$ matrices by taking the opposite sides out of $6$ sides. Denote them by
\begin{align}
&M_A^1=\left(\begin{array}{ccc}a &b \\
c &d
\end{array}\right); N_A^1= \left( \begin{array}{ccc}e &f \\
g &h
\end{array}\right),
\notag \\
&M_A^2=\left(\begin{array}{ccc}a &e \\
c &g
\end{array}\right); N_A^2 =\left( \begin{array}{ccc}b &f \\
d &h
\end{array}\right),
\notag \\
&M_A^3=\left(\begin{array}{ccc}a &e \\
b &f
\end{array}\right);  N_A^3=\left( \begin{array}{ccc}c &g \\
d &h
\end{array}\right).
\notag 
\end{align}
For each pair $(M_A^i, N_A^i)$ we can associate to it a binary quadratic form by taking
$$Q_A^i(u,v) =- \det(M_A^i u - N_A^i v) .$$
Explicitly for $A$ as above,
\begin{align} -Q_A^1(u,v) &= u^2(ad-bc) +uv ( -ah+bg+cf-de) +v^2(eh-fg), \notag \\
-Q_A^2 (u,v)&= u^2(ag-ce)+uv(-ah-bg+cf+de) + v^2(bh-df), \notag \\
-Q_A^3(u,v)&= u^2(af-be) +uv(-ah+bg-cf+de) +v^2(ch-dg).\notag \end{align}

The group $\SL_2(\bZ) \times \SL_2(\bZ) \times \SL_2(\bZ)$ acting on $V(\bZ)$ is defined by taking each $g_i$ in $(g_1, g_2, g_3)$ acting on the pair $(M_A^i, N_A^i), i  = 1,2, 3$.
For example, if $g_1 = \left( \begin{matrix} g_{11} & g_{12} \\
g_{21} & g_{22} \end{matrix} \right) $, then it acts on the pair $(M_A^1, N_A^1)$ by 
$$(M_A^1, N_A^1) \cdot \left( \begin{matrix} g_{11} & g_{12} \\
g_{21} & g_{22} \end{matrix} \right).$$

The action extends to the complex group $\GL_2(\bC) \times \GL_2(\bC) \times \GL_2(\bC)$ on the complexified vector space $V(\bC)$. This group action has only one relative invariant given by
\begin{align*}
P(A) & =\mathrm{disc}(Q^i_A)
\end{align*}
for $i=1, 2, 3$.
The corresponding rational character is
\begin{align*}
\chi(g) &=\det(g_1)^2 \det(g_2)^2 \det(g_3)^2.
\end{align*}
for $g = (g_1, g_2, g_3)$.
Furthermore, it is easy to show that
\begin{proposition}
The pair $\left(\GL_2(\bC) \times \GL_2(\bC) \times \GL_2(\bC) , V(\bC)\right)$ is a prehomogeneous vector space.
\end{proposition}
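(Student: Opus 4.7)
The plan is to produce an element $A_0 \in V(\bC)$ whose $G$-orbit is open; since $V(\bC)\cong \bC^8$ is irreducible, any open orbit is automatically dense, which will give the PVS conclusion. Openness of $G \cdot A_0$ at $A_0$ is equivalent to surjectivity of the differential $\phi_{A_0}: \mathfrak{g} \to V(\bC)$, $X \mapsto X \cdot A_0$, where $\mathfrak{g} = \mathfrak{gl}_2(\bC)^{\oplus 3}$; equivalently, by rank-nullity, one needs $\dim \Stab_G(A_0) = \dim G - \dim V(\bC) = 12 - 8 = 4$.

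The key observation is that the $G$-action described on pairs $(M^i_A, N^i_A)$ is, up to transpose in each factor, the standard outer tensor product representation on $\bC^2 \otimes \bC^2 \otimes \bC^2$. I would take
\[
A_0 \;=\; e_1 \otimes e_1 \otimes e_1 + e_2 \otimes e_2 \otimes e_2,
\]
which corresponds to the cube with $a = h = 1$ and all other entries zero. A quick check gives $M^1_{A_0} = \mathrm{diag}(1, 0)$ and $N^1_{A_0} = \mathrm{diag}(0, 1)$, so $Q^1_{A_0}(u,v) = uv$ and $P(A_0) = 1 \neq 0$; in particular $A_0$ lies off the discriminant locus.

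To verify surjectivity of $\phi_{A_0}$, let $X$ run through the four standard basis matrices in the first tensor slot: this yields the four vectors $e_1^{\otimes 3}$, $e_1 \otimes e_2 \otimes e_2$, $e_2 \otimes e_1 \otimes e_1$, $e_2^{\otimes 3}$. The analogous computations in the second and third tensor slots produce the remaining four standard basis vectors of the form $e_i \otimes e_j \otimes e_k$ with exactly one index differing from the others, so the image of $\phi_{A_0}$ contains a full basis of $V(\bC)$ and is all of $V(\bC)$. Hence $G \cdot A_0$ is open in $V(\bC)$ and, by irreducibility, dense. The only real obstacle is the translation between the pair description $(M^i_A, N^i_A)$ with its right-multiplication action and the tensor product picture, including the transpose/right-action convention; once that bookkeeping is done, the dimension count is essentially mechanical.
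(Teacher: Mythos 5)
Your proof is correct. The paper itself gives no argument for this proposition (it is preceded only by ``Furthermore, it is easy to show that''), so there is nothing to compare against; your write-up therefore supplies a proof the paper omits. The argument you use is the standard infinitesimal criterion: take $A_0 = e_1\otimes e_1\otimes e_1 + e_2\otimes e_2\otimes e_2$ (the cube with $a=h=1$), check $P(A_0)=1\neq 0$, and verify surjectivity of the orbit map's differential at $A_0$ by computing the Lie algebra action slot by slot. Each $\mathfrak{gl}_2$ factor contributes $e_1^{\otimes 3}$ and $e_2^{\otimes 3}$ plus the two basis vectors in which the distinguished slot disagrees with the other two; together the three slots yield all eight standard basis vectors of $\bC^2\otimes\bC^2\otimes\bC^2$, so the tangent image is all of $V(\bC)$ and the orbit is open, hence dense since $V(\bC)$ is irreducible. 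Your remark that the right-action/transpose convention in the paper's definition does not affect the span of the tangent image is also correct, since transposition is a linear automorphism of each $\mathfrak{gl}_2$.
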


\subsection{PVS acted by maximal parabolic subgroup of $\mathrm{GSp}(4)$}
Let $P(\bQ)$ be a maximal parabolic subgroup of $\mathrm{GSp}_4(\bQ)$ which is the semiproduct $U(\bQ) \rtimes M(\bQ)$, where $U(\bQ)$ is defined before and 
\begin{align*}
M(\bQ) = \left \{    \left( \begin{matrix} g  & \\
 &  \det(g)\cdot {}^tg^{-1} \end{matrix} \right) : g \in \GL_2(\bQ)\right\} \cong \GL_2(\bQ).
\end{align*}
Throughout the paper we define the group acting on $V(\bQ)$ to be
\begin{align*}
G(\bQ) = \GL_2(\bQ) \times M(\bQ)  \times M(\bQ),
\end{align*}
where each copy of $M(\bQ)$ acts as the $\GL_2(\bQ)$ action defined before. Therefore
\begin{proposition}
The pair $\left(G(\bC) , V(\bC)\right)$ is a prehomogeneous vector space.
\end{proposition}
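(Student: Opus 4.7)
The plan is to deduce this directly from the preceding proposition. The group $M$ is identified with $\GL_2$ via the map
\begin{align*}
\left( \begin{matrix} g & \\ & \det(g)\cdot {}^tg^{-1} \end{matrix} \right) \mapsto g,
\end{align*}
and the convention stated just before the proposition is that each copy of $M(\bQ)$ acts on $V(\bQ)$ through this identification, i.e.\ by the standard $\GL_2$ action on the corresponding pair $(M_A^i, N_A^i)$ described in the first subsection. Thus after base change to $\bC$, the action of $G(\bC) = \GL_2(\bC) \times M(\bC) \times M(\bC)$ on $V(\bC)$ is intertwined by the canonical isomorphism $G(\bC) \cong \GL_2(\bC) \times \GL_2(\bC) \times \GL_2(\bC)$ with the action already studied.

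First I would write this isomorphism down explicitly and check that it commutes with the two $V(\bC)$-actions; this amounts to a line of bookkeeping using the definitions. Since the existence of an open dense orbit depends only on the image of the acting group in $\GL(V(\bC))$, the conclusion then follows immediately from the previous proposition, and one even obtains the same relative invariant $P(A)=\mathrm{disc}(Q_A^i)$ together with a rational character of $G(\bC)$ obtained from $\chi(g)=\det(g_1)^2\det(g_2)^2\det(g_3)^2$ by pulling back along the identifications $M \cong \GL_2$.

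There is essentially no mathematical obstacle here: the proposition is a cosmetic repackaging of the action which records two of the three $\GL_2$ factors via their embedding into the Siegel Levi of $\mathrm{GSp}_4$. The point of the repackaging, which becomes the real work in later sections rather than in this statement, is that in this form the two $M$-factors can be paired with the Bessel subgroup $R = U\rtimes T$ of $\mathrm{GSp}_4$ so that distributions on $V$ can later be related to the period $\mathcal{P}(f,\chi)$.
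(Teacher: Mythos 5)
Your proposal is correct and matches the paper's (essentially tacit) argument: the paper states the proposition with only a "Therefore," relying exactly on the observation that the action of $G(\bC)$ factors through the identification $M \cong \GL_2$, so that $(G(\bC), V(\bC))$ is the same prehomogeneous vector space as $(\GL_2(\bC)^3, V(\bC))$ from the preceding proposition. The extra remarks about the relative invariant and the rational character pulling back, and about why this repackaging is useful later, are accurate and harmless elaboration.
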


Let a negative integer $D$ be the fundamental discriminant. Let $V_D(\bQ)$ denote the subset of $V(\bQ)$ 
\begin{align*}
V_D(\bQ) = \left \{   P(A) = D : A \in V(\bQ)  \right \}.
\end{align*}
For $g \in G(\bQ)$, we will write $(g_1, g_2, g_3) \in  \GL_2(\bQ) \times M(\bQ)  \times M(\bQ)$ as the coordinate, and write 
\begin{align*}
g_2 =   \left(    \begin{matrix}   m(g_2)    & 0 \\
0 &\det(m(g_2)) \cdot {}^t m(g_2)^{-1} \end{matrix}\right).
\end{align*}
Denote by $G^1(\bQ)$ the subgroup of $G(\bQ)$
\begin{align*}
G^1(\bQ) = \left\{g \in G(\bQ)  : \det(g_1)\det(m(g_2)) \det(m(g_3)) = 1 \right\}.
\end{align*}
Then $G^1(\bQ)$ acts invariantly on $V_D(\bQ)$. Let $Z_G(\bQ)$ denote the center of $G(\bQ)$, then $Z_G^1(\bQ) = Z_G(\bQ) \cap G^1(\bQ)$ acts trivially on $V(\bQ)$.

\subsection{PVS of pairs of quaternary alternating $2$-forms}
Let $\bZ^2 \otimes \wedge^2 \bZ^4$ be the space of paris of quaternary alternating 2-forms. Denote its element $F= (M_F, N_F)$ by 
\begin{align*}
\left( \left( \begin{matrix}
 0 & r_1 & a & b \\
 -r_1 & 0 & c & d \\
 -a & -c & 0 & l_1 \\
 -b & -d & -l_1 & 0 
\end{matrix} \right),  \left( \begin{matrix}
 0 & r_2 & e & f \\
 -r_2 & 0 & g & h \\
 -e & -g & 0 & l_2 \\
 -f & -h & -l_2 & 0 
\end{matrix} \right) \right).
\end{align*}
The action of $\SL_2(\bZ) \times \SL_4(\bZ)$ is given as follows: an element $(g_1, g_2) \in \SL_2(\bZ)\times \SL_4(\bZ)$ acts by sending the pair $F=(M_F, N_F)$ to
 $$ (M_F, N_F) \cdot (g_1, g_2) = ( s \cdot {}^tg_2 M_F g_2 + u\cdot {}^tg_2 N_F g_2, t\cdot {}^tg_2 M_F g_2+ v\cdot {}^tg_2 N_F g_2) ,$$
where $g_1 = \left( \begin{array}{cc} s & t \notag \\ u & v \notag \end{array} \right)$. Then the fusion process is defined as a $\bZ$-linear mapping
\begin{align*}
\mathrm{id}\otimes \wedge_{2,2}: \bZ^2 \otimes \bZ^2 \otimes \bZ^2 \to \bZ^2 \otimes \wedge^2(\bZ^2 \oplus \bZ^2) = \bZ^2 \otimes \wedge^2\bZ^4.
\end{align*}
Explicitly, it is given by
\begin{align*}
 \left( \left( \begin{matrix}   
a & b \\
c & d
\end{matrix} \right),  \left( \begin{matrix}   
e & f \\
g & h
\end{matrix} \right) \right) \to \left( \left( \begin{matrix}
 0 & 0 & a & b \\
 0 & 0 & c & d \\
 -a & -c & 0 & 0 \\
 -b & -d & 0 & 0 
\end{matrix} \right),  \left( \begin{matrix}
 0 & 0 & e & f \\
 0 & 0 & g & h \\
 -e & -g & 0 & 0 \\
 -f & -h & 0 & 0 
\end{matrix} \right) \right).
\end{align*}
One of Bhargava's higher composition law proves the above mapping is surjective on the level of equivalent classes.

To each $F =(M_F, N_F)\in \bZ^2 \otimes \bZ^4$, we can associate a binary quadratic form $Q$ given by
\begin{equation*}
Q_F(u, v) = -\mathrm{Pfaff}(M_F u - N_F v) = -\sqrt{\mathrm{Det} (M_F u- N_Fv)},
\end{equation*}
where the sign of the $\text{Pfaff}$ is chosen by taking
$$\text{Pfaff}\left( \left( \begin{matrix}  & I  \\
-I &  \end{matrix} \right) \right) = 1, $$
or alternatively, 
$$\text{Pfaff}\left( \left( \begin{matrix}  0 & r & a & b  \\
-r & 0 & c & d \\
  -a & -c & 0 & l\\
  -b & -d & -l & 0 \end{matrix} \right) \right) = ad-bc - rl . $$
  
We now consider the subgroup of $\SL_2(\bZ)\times \SL_4(\bZ)$ defined by $
H(\bZ) = B_2(\bZ)  \times P_{2,2}(\bZ)
$, where $P_{2,2}(\bZ)$ is the upper triangular parabolic subgroup of $\SL_4(\bZ)$ that has the shape 
\begin{align*}
\left(   \begin{matrix} \ast & \ast& s & t\\
  \ast & \ast& u & v\\
  0 & 0 & \ast  & \ast \\
  0 & 0 & \ast & \ast \end{matrix}   \right)
\end{align*}
with the upper left $2 \times 2$ matrix determinant $1$.  

Let $W(\bC)$ be the subspace of elements with vanishing $r_1$-position. Then the action extends to the complex group $H(\bC)$ on complexified vector space 
$W(\bC)$. It has three relative invariants, they are 
\begin{align*}
\mathrm{disc}(F) &= \mathrm{disc} ( Q_F) , \\
P_0(F)& = r_2(F), \\
P_1(F) &= -\mathrm{Pfaff}(M_F) . 
\end{align*}
We denote by $\chi_0, \chi_1$ the characters of group $H(\bC)$ determined by the polynomial invariants $P_0, P_1$ respectively, i.e.,
\begin{align*}
P_0(Fg) = \chi_0(g) P_0(F) \  \mathrm{and}\ \  P_1(Fg) = \chi_1(g) P_1(F). 
\end{align*}
It is easy to see that 
\begin{proposition}
The pair $\left(H(\bC) , W(\bC)\right)$ is a prehomogeneous vector space.
\end{proposition}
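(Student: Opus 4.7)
The plan is to verify two things: that $W(\bC) \subset \bC^2 \otimes \wedge^2 \bC^4$ is stable under the action of $H(\bC) = B_2(\bC) \times P_{2,2}(\bC)$, and that $H(\bC)$ admits an orbit in $W(\bC)$ of full dimension $11$. Since $W(\bC)$ is an irreducible variety, a top-dimensional orbit is automatically open and dense, which yields the PVS property.

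I would begin with stability. The defining condition of $W(\bC)$ is $r_1 = 0$, i.e.\ the upper-left $2\times 2$ block $(M_F)_{11}$ vanishes. For $g_2 = \bigl(\begin{smallmatrix} A & C\\ 0 & D \end{smallmatrix}\bigr) \in P_{2,2}(\bC)$, the upper-left block of ${}^t g_2 M_F g_2$ is ${}^t A\cdot (M_F)_{11}\cdot A$, and the identity ${}^t A J A = \det(A)\, J$ for $2\times 2$ matrices, combined with our normalization $\det A = 1$, shows this block is unchanged. Thus $r_1=0$ is preserved by $P_{2,2}(\bC)$; and since $g_1 \in B_2(\bC)$ is upper-triangular, the first form of $(g_1,g_2)\cdot F$ is $s\cdot {}^t g_2 M_F g_2$, again with vanishing upper-left block.

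Next I would count dimensions: $\dim W(\bC) = 12 - 1 = 11$, while $\dim H(\bC) = \dim B_2 + \dim P_{2,2}|_{\det A=1} = 2 + (3+4+3) = 12$. So it suffices to exhibit an $F_0\in W(\bC)$ whose stabilizer in $H(\bC)$ is one-dimensional. A convenient candidate is $F_0 = (M_{F_0}, N_{F_0})$ given by $r_1 = l_1 = 0$, $X = I_2$, $r_2 = l_2 = 1$, $Y = \bigl(\begin{smallmatrix}0 & 1\\ 1 & 0\end{smallmatrix}\bigr)$; a direct calculation shows that the three relative invariants $\mathrm{disc}(Q_{F_0})$, $P_0(F_0)=1$, $P_1(F_0)=-1$ are all nonzero. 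To finish, I would compute the differential of the orbit map $\mu: H \to W$, $g\mapsto g\cdot F_0$, at the identity, and verify that its image is $11$-dimensional. Equivalently one can isolate an explicit stabilizer: the one-parameter family $A_\theta=\bigl(\begin{smallmatrix}\cosh\theta & \sinh\theta\\ \sinh\theta & \cosh\theta\end{smallmatrix}\bigr)$, $D_\theta = ({}^t A_\theta)^{-1}$, together with $g_1 = I$ and $C = 0$, fixes $F_0$ because ${}^t A_\theta\, I\, D_\theta = I$ and ${}^t A_\theta Y D_\theta = Y$, and one checks that this is the entire kernel of $d\mu|_e$.

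The main obstacle will be verifying that $d\mu|_e$ has rank $11$. The unipotent part $C$ of $P_{2,2}$ couples the $(1,2)$-blocks of each alternating form to the $(2,2)$-blocks, while the $B_2$-action independently mixes $M_F$ and $N_F$; assembling the $11\times 12$ Jacobian of $\mu$ at $F_0$ and checking that it has full rank $11$ is straightforward but requires careful bookkeeping. Once that rank is confirmed, the orbit $H(\bC)\cdot F_0$ is open and dense in $W(\bC)$, and the PVS property follows.
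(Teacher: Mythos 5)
Your overall strategy (verify stability of $W(\bC)$ under $H(\bC)$, then exhibit a point with an open orbit by a dimension count) is reasonable, and your verification that $r_1=0$ is preserved is correct. The paper itself offers no proof, merely asserting the statement is easy. However, your proof has a genuine gap: the specific point $F_0$ you choose is \emph{not} generic, and its stabilizer is substantially larger than the one‑parameter family you exhibit.

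Concretely, take $F_0$ with $r_1=l_1=0$, $X=I_2$, $r_2=l_2=1$, $Y=\left(\begin{smallmatrix}0&1\\1&0\end{smallmatrix}\right)$ as you propose, and write a typical element of $\mathrm{Lie}\,P_{2,2}$ as $\eta=\left(\begin{smallmatrix}\alpha & \gamma\\ 0 & \delta\end{smallmatrix}\right)$ with $\mathrm{tr}(\alpha)=\mathrm{tr}(\delta)=0$. The condition $\delta M_{F_0}=0$ forces $\delta=-{}^t\alpha$ and $\gamma={}^t\gamma$, and then a direct calculation (using that ${}^t\alpha J+J\alpha=0$ and $\alpha J+J{}^t\alpha=0$ for traceless $\alpha$, where $J=\left(\begin{smallmatrix}0&1\\-1&0\end{smallmatrix}\right)$) shows that $\delta N_{F_0}=0$ is equivalent to $[\gamma,Y]=0$ together with $qI+[{}^t\alpha,Y]+J\gamma=0$. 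Solving, one finds $q=0$ and, for $\alpha=\left(\begin{smallmatrix}a&b\\c&-a\end{smallmatrix}\right)$, that $\gamma=-2aI+(b-c)Y$ works for \emph{all} $a,b,c$. Thus the Lie algebra of the stabilizer is $3$-dimensional, not $1$-dimensional. For instance, in addition to your hyperbolic one‑parameter family (which corresponds to $a=0$, $b=c=1$), the elements
\begin{align*}
\eta_1 = \left(\begin{matrix} 1 & 0 & -2 & 0\\ 0 & -1 & 0 & -2\\ 0&0&-1&0\\ 0&0&0&1\end{matrix}\right),
\qquad
\eta_2 = \left(\begin{matrix} 0 & 1 & 0 & 1\\ 0 & 0 & 1 & 0\\ 0&0&0&0\\ 0&0&-1&0\end{matrix}\right)
\end{align*}
(together with $g_1=I$) also annihilate $F_0$, as you can verify by computing ${}^t\eta_i M_{F_0}+M_{F_0}\eta_i$ and ${}^t\eta_i N_{F_0}+N_{F_0}\eta_i$ directly. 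Consequently the $H(\bC)$-orbit of your $F_0$ has dimension at most $\dim H(\bC)-3 \le 9 < 11 = \dim W(\bC)$, and the openness conclusion fails for this point. The source of the excess symmetry is your choice $X=I_2$: the full group $\{(A,({}^tA)^{-1}):A\in\SL_2\}\subset P_{2,2}$ already fixes the block $X$, and the further constraints from $Y$ being a symmetric involution only cut this by two dimensions rather than killing it. You would need to replace $F_0$ by a point without this coincidence (e.g.\ take $X$ with distinct diagonal entries, or a non‑symmetric $Y$) and then redo the rank computation of $d\mu|_e$.
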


\section{Periods over Heegner Points} 
\subsection{Shintani integral on the PVS of $2 \times 2 \times 2$ boxes.}
Let $\phi$ be a Schwartz function of $V(\bA)$ such that 
\begin{align*}
\phi(v) = e^{-  \pi \tau ( a^2 + b^2 + c^2 + d^2 + e^2 + f^2 + g^2)} \ \ \mathrm{for} \ v =  \left(  \left( \begin{matrix} a & b \\
 c & d  \end{matrix} \right), \left( \begin{matrix} e & f \\
 g & h  \end{matrix} \right) \right) \in V(\bR), 
\end{align*}
where $\tau$ is a positive real number, and 
 \begin{align*}
\phi(v) = 1 \ \ \mathrm{for} \ v \in V(\bZ_p) .
\end{align*}
Given $v \in V_D(\bQ)$, denote by $S^2_v, S^3_v$ the matrix representation of associated binary quadratic form $Q^2_v, Q^3_v$, respectively. Define the Shintani integral to be
\begin{align*}
J(f, \phi ) = \int_{Z_G^1(\bA) G^1(\bQ) \backslash G^1(\bA)} \sum_{v \in V_D(\bQ)} \phi(vg) \Phi_f^{S^2_v}(g_2) \overline{\Phi_f^{S^3_v}(g_3) } dg.
\end{align*}
Then we need to prove that
\begin{align*}
 J(f, \phi) \sim |\mathcal{P}(\Phi_f, 1)|^2.
\end{align*}

One of the ingredients in the proof is due to Bhargava's remarkable results showing that the the set of $\SL_2(\bZ) \times \SL_2(\bZ) \times \SL_2(\bZ)$-equivalent classes of projective cubes is isomorphic to the 
pairs of the strict ideal class group of a quadratic ring. In other words, if denote by $\Lambda(D)$ the set of Heegner points with discriminant $D$, then the integral orbits of $V(\bR)$ is parametrized by the pairs of Heegner points:
\begin{align*}
V(\bZ)/G^1(\bZ) \longleftrightarrow \left \{   (z_i, z_j) : z_i, z_j \in \Lambda(D) \right \}.
\end{align*}
Another ingredient in the proof is to determine the stabilizer group of the action $G^1(\bR)$. Let $S =  \left( \begin{matrix}  a & b/2 \\
 b/2 & c  \end{matrix} \right) 
$ denote the Heegner point $z$. Then the stabilizer in one of $M(\bR)$ is exactly $T^1(\bR)$, where $T^1(\bR)= T(\bR) \cap \SL_2(\bR)$ and
\begin{align*}
T^1(\bR)   
& =   \left\{\left( \begin{matrix} x + y b/2 & yc \\
 -ya & x - yb/2  \end{matrix} \right) :  x, y \in \bR , x^2 - y^2D/4 = 1\right\}.
 \end{align*}
\begin{theorem} \label{thm:periods}
The Shintani integral $J(f, \phi )$ can be evaluated as
\begin{align*}
&  J(f, \phi )  \sim   L(\phi) \cdot  \frac{4}{w(D)^2}  \left(\frac{h_K}{\mathrm{vol}(K^{\times} \bA^{\times} \backslash \bA^{\times}_K)} \right)^2  |\mathcal{P}(\Phi_f, 1)|^2 , \ \mathrm{as} \ \tau \to \infty , 
\end{align*}
where the distribution $L$ is given by 
\begin{align*}
L(\phi) =  \int_{Z_G^1(\bR) \backslash  G^1(\bR)} \phi(vg) dg,
\end{align*}
where $v$ is any element in $V_D(\bR)$.
\end{theorem}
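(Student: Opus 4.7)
The plan is to unfold the Shintani integral using Bhargava's orbit parametrization, identify the resulting adelic integrals as squares of Bessel periods, and extract the stated asymptotic by concentrating the Archimedean Schwartz factor at the identity coset. First I would use the stated bijection $V(\bZ)/G^1(\bZ) \leftrightarrow \Lambda(D)\times\Lambda(D)$, promoted to the rational level, to decompose
\begin{align*}
V_D(\bQ) = \bigsqcup_{(z_i, z_j) \in \Lambda(D)^2} G^1(\bQ) \cdot v_{ij}
\end{align*}
with chosen representatives $v_{ij}$. Here the third ideal class in Bhargava's triple $(I_1,I_2,I_3)$ with $I_1 I_2 I_3 = 1$ is determined by the other two, explaining why a triple product of cubes corresponds to pairs of Heegner points. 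Standard unfolding then converts $J(f,\phi)$ into
\begin{align*}
J(f,\phi) = \sum_{(z_i, z_j)} \int_{Z_G^1(\bA) \Stab(v_{ij})(\bQ) \backslash G^1(\bA)} \phi(v_{ij} g)\, \Phi_f^{S^2_{v_{ij}}}(g_2)\, \overline{\Phi_f^{S^3_{v_{ij}}}(g_3)}\, dg .
\end{align*}

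Next I would analyze the stabilizer of $v_{ij}$ in $G^1$. Because each of the binary quadratic forms $Q^i_{v_{ij}}$ has $T^1$ as its orthogonal stabilizer, the full stabilizer is essentially a diagonally embedded triple of $T^1$'s constrained by the $G^1$ determinant condition; the unit contribution $w(D)$ enters here through $Z_G^1 \cap \Stab(v_{ij})$. Folding the $T^1$-integrals in the second and third factors against $\Phi_f^{S^2_{v_{ij}}}$ and $\Phi_f^{S^3_{v_{ij}}}$ respectively, together with the translation formula $\Phi_f^{v\,{}^tASA}(g)=\Phi_f^S(\mathrm{diag}(A, v^{-1}{}^tA^{-1})g)$ from the earlier lemma, converts each integral to a Bessel period at a fixed base $S$, and summing over class representatives $z_i$ recovers the full integral $\int_{\bA^{\times} T(\bQ)\backslash T(\bA)}\Phi_f^S(tg)\,dt$ with trivial Hecke character. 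The product of these two factors produces $|\mathcal{P}(\Phi_f,1)|^2$, with the normalization $\mathrm{vol}(K^\times \bA^\times \backslash \bA_K^\times)^{-2}$ and the class-number factor $h_K^2/w(D)^2$ appearing from the passage between rational representatives and adelic orbits.

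Finally I would treat the remaining outer integral, which is schematically $\int \phi(v_{ij}g)\,dg$ over the complement of the stabilizer in $G^1(\bA)$. At finite places $\phi$ is a characteristic function of $V(\bZ_p)$ and yields the finite-adelic portion of $L(\phi)$, while at the Archimedean place the Gaussian-type $\phi$ with parameter $\tau$ forces $v_{ij}g$ to remain near a fixed orbit representative; as $\tau \to \infty$ the contribution collapses to the distribution $L(\phi) = \int_{Z_G^1(\bR)\backslash G^1(\bR)} \phi(vg)\,dg$, which is independent of which representative is chosen since all lie in $V_D(\bR)$. The main obstacle I expect is the bookkeeping in the stabilizer step: the adelic stabilizer of a rational $v_{ij}$ is genuinely larger than the product of local stabilizers of integral representatives due to class-number phenomena, and one has to track the interaction between the diagonal $T^1$-triple in $G^1$ and the pairwise $T$-integrations defining the Bessel periods, to match both the factor $h_K^2/w(D)^2$ and the volume $\mathrm{vol}(K^\times\bA^\times \backslash \bA_K^\times)^2$ correctly. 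A secondary technical issue is the convergence/interchange of the $\tau \to \infty$ limit with the sum over orbits, which should follow from the rapid decay of $\Phi_f$ on the cusp together with the Gaussian decay of $\phi$.
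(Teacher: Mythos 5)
Your overall architecture matches the paper's (unfold via Bhargava's bijection, peel off stabilizers, assemble the adelic $T$-integrals from class representatives, then concentrate the Gaussian as $\tau\to\infty$), but the very first step contains a genuine error that would derail the bookkeeping downstream.

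You claim that Bhargava's bijection $V_D(\bZ)/G^1(\bZ) \leftrightarrow \Lambda(D)\times\Lambda(D)$ can be ``promoted to the rational level'' to give
\begin{align*}
V_D(\bQ) = \bigsqcup_{(z_i,z_j)\in\Lambda(D)^2} G^1(\bQ)\cdot v_{ij}.
\end{align*}
This is false: the parametrization by pairs of ideal classes is an \emph{integral} orbit statement, and the number of $G^1(\bQ)$-orbits on $V_D(\bQ)$ is much smaller than $h_K^2$ (over $\bQ$ the class group information is invisible, since one is essentially classifying forms up to rational equivalence). Consequently the unfolding
$J(f,\phi) = \sum_{(z_i,z_j)} \int_{Z_G^1(\bA)\Stab(v_{ij})(\bQ)\backslash G^1(\bA)}\cdots$
that you write down is not a legitimate rearrangement of the original sum over $V_D(\bQ)$: you would be summing each rational orbit $h_K^2/(\text{number of rational orbits})$ too many times. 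You then also want $h_K^2/w(D)^2$ to appear ``from the passage between rational representatives and adelic orbits,'' which is in tension with having already invoked $\Lambda(D)^2$ as the orbit set; the class number factor should appear exactly once.

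The paper avoids this by inserting an intermediate step you omit: since $\phi_p$ is the characteristic function of $V(\bZ_p)$ and $G^1$ has class number one, the adelic integral over $Z_G^1(\bA) G^1(\bQ)\backslash G^1(\bA)$ with sum over $V_D(\bQ)$ collapses to the real integral over $Z_G^1(\bR) G^1(\bZ)\backslash G^1(\bR)$ with sum over $V_D(\bZ)$. \emph{Only then} is Bhargava's bijection applicable, giving the sum over $\Lambda(D)^2$ with the factor $4/w(D)^2$ from the integral stabilizers in the $\SL_2(\bZ)$ copies. The class number $h_K$ subsequently enters through the choice of coset representatives $t_c$ for $T(\bA) = \sqcup_c\, t_c\, T(\bQ) T(\bR)\prod_p(T(\bQ_p)\cap\GL_2(\bZ_p))$ together with the fact that $S_c = \det(\gamma_c)^{-1}({}^t\gamma_c) S(D)\gamma_c$ exhausts the $\SL_2(\bZ)$-classes of discriminant-$D$ forms; summing the $T^1(\bR)$-integrals over the $t_c$ reconstructs the adelic period $\int_{\bA^\times T(\bQ)\backslash T(\bA)}\Phi_f^S(tg)\,dt$. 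Your translation-lemma step and the $\tau\to\infty$ concentration are correct, but to make the proof go through you must replace the rational-orbit decomposition with this adelic-to-integral reduction before invoking the Heegner-point parametrization.
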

 \begin{proof}
 Let $D$ be a negative integer that is the discriminant of the imaginary quadratic field $\bQ(\sqrt{D})$ and define
 \begin{align*}
 S = S(D) = \begin{cases}
 \left(   \begin{matrix}   
 1 & 0 \\
  0 & \frac{-D}{4}
 \end{matrix}\right) \ \  \mathrm{if} \ D \equiv 0 \ (\mathrm{mod} \ 4),\\
 \left(   \begin{matrix}   
 1 & \frac{1}{2} \\
  \frac{1}{2} & \frac{1-D}{4}
 \end{matrix}\right) \ \  \mathrm{if} \ D \equiv 1 \ (\mathrm{mod} \ 4).
 \end{cases}
 \end{align*}
 Recall that $\mathrm{Cl}_K$ denotes the ideal class group of $K$ . Let $(t_c), c\in \mathrm{Cl}_K$, be the coset representatives such that
 \begin{align*}
 T(\bA) = \sqcup_{c} t_c T(\bQ) T(\bR) \prod_{p < \infty} \left( T(\bQ_p) \cap \GL_2(\bZ_p) \right)
 \end{align*}
 with $t_c \in \prod_{p < \infty} T(\bQ_p)$.
 If we write 
 \begin{align*}
 t_c = \gamma_c m_c k_c,
 \end{align*}
 with $\gamma_c \in \GL_2(\bQ), m_c \in \GL_2^{+}(\bR)$ and $k_c \in \prod_{p < \infty} \GL_2(\bZ_p)$, then the matrices 
 \begin{align*}
 S_c = \det(\gamma_c)^{-1} ({}^t \gamma_c) S(D) \gamma_c
 \end{align*}
 form a set of representatives of the $\SL_2(\bZ)$-equivalent classes of integral binary quadratic forms of discriminant $D$. 
 Unfolding the integral, we have
 \begin{align*}
 & J(f, \phi) =  \int_{Z_G^1(\bA) G^1(\bQ) \backslash G^1(\bA)} \sum_{v \in V_D(\bQ)} \phi(vg)\Phi_f^{S^2_v}(g_2) \overline{\Phi_f^{S^3_v}(g_3) } dg \\
 & =   \int_{Z_G^1(\bR) G^1(\bZ) \backslash G^1(\bR)}  \sum_{v \in V_D(\bZ)} \phi(vg)\Phi_f^{S^2_v}(g_2) \overline{\Phi_f^{S^3_v}(g_3) } dg \\
 & = \frac{4}{ w(D)^2}\sum_{(z_i, z_j)  \in \Lambda(D) \times \Lambda(D) }  \int_{Z_G^1(\bR) \backslash  G^1(\bR)}  \phi(v_{ij}g)\Phi_f^{S^2_{v_{ij}}}(g_2) \overline{\Phi_f^{S^3_{v_{ij}}}(g_3) }  dg \\
  & = \frac{4}{w(D)^2}\sum_{(z_i, z_j)  \in \Lambda(D) \times \Lambda(D) }  \int_{ Z_G^1(\bR) \backslash  G^1(\bR)}  \phi(v_0g) \Phi_f^{S^2_{v_{ij}}}(m_ig_2) \overline{\Phi_f^{S^3_{v_{ij}}}(m_jg_3) }  dg \\
    & = \frac{4}{w(D)^2}\sum_{(z_i, z_j)  \in \Lambda(D) \times \Lambda(D) }  \int_{ Z_G^1(\bR) \backslash  G^1(\bR)}  \phi(v_0g) \Phi_f^{S}(t_ig_2) \overline{\Phi_f^{S}(t_jg_3) }  dg ,
 \end{align*}
 where $w(D)$ is the counting measure of stabilizer inside each copy of $\SL_2(\bZ)$. Notice that
 \begin{align*}
   &  \sum_{(z_i, z_j)  \in \Lambda(D) \times \Lambda(D) }  \int_{T^1(\bR)}   \Phi_f^{S}(t_i t_{\infty} g_2) dt_{\infty}   \int_{T^1(\bR)}  \overline{\Phi_f^{S}(t_j t_{\infty}g_3) } dt_{\infty}  \\
   & =  \sum_{c \in \mathrm{Cl}_K}  \int_{T^1(\bR)}  \Phi_f^{S}(t_c t_{\infty} g_2) dt_{\infty}  \cdot  \sum_{c \in \mathrm{Cl}_K}   \int_{T^1(\bR)}  \overline{\Phi_f^{S}(t_c t_{\infty}g_3)}  dt_{\infty}  \\
 & = \left(\frac{h_K}{\mathrm{vol}(K^{\times} \bA^{\times} \backslash \bA^{\times}_K)} \right)^2  
 \int_{\bA^{\times} T(\bQ) \backslash  T(\bA)}   \Phi_f^{S}( t g_2)  dt \cdot    \int_{\bA^{\times} T(\bQ) \backslash T(\bA)}  \overline{\Phi_f^{S}(t g_3)}  dt,
\end{align*}
assuming $T^1(\bR)$ has measure 1. 
We will set $C(g)$ such that
\begin{align*}
 & \int_{\bA^{\times} T(\bQ) \backslash  T(\bA)}  \int_{U(\bQ) \backslash U(\bA)} \Phi_f(u t g) \theta_S(u)^{-1}   du \\
 & = C(g) \int_{\bA^{\times} T(\bQ) \backslash  T(\bA)}  \int_{U(\bQ) \backslash U(\bA)} \Phi_f(u t ) \theta_S(u)^{-1} du  \\
 & = C(g) \mathcal{P}(\Phi_f, 1). 
\end{align*}
Note that $C(1) = 1$. To avoid evaluating the integral, we take the asymptotic as $\tau \to \infty$,
\begin{align*}
& \int_{Z_G^1(\bR) \backslash  G^1(\bR)}  \phi_0(v_0 g) C(g_2)  \overline{C(g_3)} dg \sim \int_{ Z_G^1(\bR) \backslash  G^1(\bR)}  \phi(v_0 g)   dg,  \ \mathrm{as} \ \tau \to \infty,
\end{align*}
then the proof follows.
 \end{proof}
 
In next section we will obtain the same period integral by defining a similar Shintani integral on a larger PVS, which is the skew-symmetrization of PVS of $2 \times 2 \times 2$ boxes.
 
\subsection{Shintani integral on the PVS of pairs of quaternary alternating $2$-forms}
In this section we construct a Shintani integral for the PVS of pairs of quaternary alternating $2$-forms. Let $H(\bQ)$ denote the subgroup $B_2(\bQ) \times P_{2,2}(\bQ)$, 
\begin{align*}
 g  = (g_1, p) =  \left(  \left( \begin{matrix}  a_1 & b_1 \\
0 & d_1 & \\
   \end{matrix} \right),  
 \left( \begin{matrix}  a_2 & b_2 & s & t\\
c_2& d_2 & u & v\\
0 & 0 & a_3 & b_3 \\
0 & 0 & c_3 &d_3  \end{matrix} \right) \right) \ \ \mathrm{for} \ g \in H(\bQ).
\end{align*} 
Denote by $H^1(\bQ)$ the subgroup of $H(\bQ)$ defined by 
\begin{align*}
\{ g=(g_1, p) : \det(g_1) \det(p) = 1\},
\end{align*}
Similarly, let $B(\bQ)$ denote the subgroup $B_2(\bQ) \times P_{\mathrm{min}}(\bQ)$, 
\begin{align*}
 g  = (g_1, p) =  \left(  \left( \begin{matrix}  a_1 & b_1 \\
0 & d_1 & \\
   \end{matrix} \right),  
 \left( \begin{matrix}  a_2 & b_2 & s & t\\
0 & d_2 & u & v\\
0 & 0 & a_3 & b_3 \\
0 & 0 & 0 &d_3  \end{matrix} \right) \right) \ \ \mathrm{for} \ g \in H(\bQ).
\end{align*} 
Denote by $B^1(\bQ)$ the subgroup of $B(\bQ)$ defined by 
\begin{align*}
\{ g=(g_1, p) : \det(g_1) \det(p) = 1\},
\end{align*}
Let $Z^1_H(\bQ)$ be the center of the group $H^1(\bQ)$ and $T_H(\bQ)$ be the subgroup acting on $W(\bQ)$ trivially. Then 
\begin{align*}
T_H(\bQ) = \left\{ \left(  \left( \begin{matrix}  t^{-2} & 0 \\
0 & t^{-2} & \\
   \end{matrix} \right),  
 \left( \begin{matrix}  t & 0 & 0 & 0\\
0 & t & 0 & 0\\
0 & 0 & t & 0 \\
0 & 0 & 0 & t  \end{matrix} \right) \right)  : t \in \bQ^{\times} \right \} ,
\end{align*}
and we will identify 
\begin{align*}
t \in \bQ^{\times}  \mapsto  \left(  \left( \begin{matrix}  t^{-1} & 0 \\
0 & t^{-1} & \\
   \end{matrix} \right),  
 \left( \begin{matrix}  t & 0 & 0 & 0\\
0 & t & 0 & 0\\
0 & 0 & 1 & 0 \\
0 & 0 & 0 & 1 \end{matrix} \right) \right)  \in  T_H(\bQ) \backslash Z^1_H(\bQ) . 
\end{align*}
Let $M(\bQ)$ denote the Levi subgroup of $H(\bQ)$ by
\begin{align*}
g = (g_1, g_2, g_3) =  \left( g_1,  \left(\begin{matrix} g_2 & 0\\
0 & d_3\\
  \end{matrix} \right) \right) \ \ \mathrm{for} \ g \in M(\bQ),
\end{align*}
and also denote by  $U(\bQ)$ the unipotent subgroup generated by
\begin{align*}
\left(  \begin{matrix}  1 &0\\
0 &  1   \end{matrix}\right)  \times \left(\begin{matrix} 1 & 0 & s& t\\
0 & 1 & u & v\\
0 & 0 & 1  & 0 \\
0 & 0 & 0 & 1     \end{matrix} \right), \ \ s, t, u, v \in \bQ.
\end{align*}
We are also free to identify 
\begin{align*}
g_ 2 =  \left(\begin{matrix} g_2 & 0\\
0 & \det(g_2) \cdot {}^t g_2^{-1}\\
  \end{matrix} \right) , \ \ g_ 3 =  \left(\begin{matrix} g_3 & 0\\
0 & \det(g_3) \cdot {}^t g_3^{-1}\\
  \end{matrix} \right) \in \mathrm{GSp}_4
\end{align*}
as before.

Let $H(\bA), B(\bA)$ be the adelization of $H(\bQ)$.  Let $W_D(\bA)$ be the subset of all alternating $2$-forms with discriminant $D$ and with vanishing entry $r_1$, and $W^0_D(\bA)$ be the subset of $W_D(\bA)$ with vanishing entry $a$. Then $H^1(\bA), B^1(\bA)$ acts invariantly on $W_D(\bA)$ and $W^0_D(\bA)$ respectively.
Recall two rational characters $\chi_0$ and $\chi_1$ of $H(\bA)$ are given by 
\begin{align*}
P_0( wg) = \chi_0(g) P_0(w),\ \ \mathrm{and} \ \  P_1(wg) = \chi_1(g) P_1(w),
\end{align*}
where $P_0(w) = r_2(w)$ and $P_1(w) = -\mathrm{Pfaff}(M_w)$.
The measure on $H(\bA)$ is defined as follows. The measure on $B_2(\bA)$ is the usual left invariant measure. The measure on $P_{2,2}(\bA)$ is given by $dp = dm dn$ if we decompose $p \in P_{2,2}(\bA)$ as
\begin{align*}
 p=m n = \left( \begin{matrix}  a_2 & b_2 & 0 & 0  \\
c_2 & d_2 & 0 & 0\\
0 & 0 & a_3 & b_3 \\
0 & 0 & c_3 &d_3   \end{matrix} \right)
  \left(  \begin{matrix}  1 & 0& s & t\\
0 & 1 & u & v\\
0 & 0 & 1& 0\\
0 & 0 & 0 & 1 \end{matrix} \right),
\end{align*}
where $dm$ is the left invariant measure of the Levi subgroup $M(\bA)$. 

Now we are going to define a Shintani integral associated to the PVS $(H^1(\bA), W_D(\bA))$.
Given any $n \neq 0 \in \bZ$, we define a family of Schwartz functions $\phi_{n}$ on $W(\bA)$ such that
\begin{align*}
\phi_{n} (w  ) & = \exp^{- \pi \tau\left(a(w  )^2 + b(w  )^2+ c(w  ) ^2 + d(w  )^2 + e(w  )^2 + f(w  )^2+ g(w  ) ^2 + h(w  )^2\right)}   \\
& \ \ \ \cdot \exp^{ -\pi  \left(n^2 r_1(w)^2 + r_2(w)^2 + l_1(w)^2+ l_2(w)^2  \right)},
\end{align*}
where $\tau$ is again a positive real parameter, for
\begin{align*}
w = \left( \left( \begin{matrix}
 0 & r_1 & a & b \\
 -r_1 & 0 & c & d \\
 -a & -c & 0 & l_1 \\
 -b & -d & -l_1 & 0 
\end{matrix} \right),  \left( \begin{matrix}
 0 & r_2 & e & f \\
 -r_2 & 0 & g & h \\
 -e & -g & 0 & l_2 \\
 -f & -h & -l_2 & 0 
\end{matrix} \right) \right) \in W_D(\bR),
\end{align*}
and 
\begin{align*}
\phi_n(w  ) = 1 \ \ \mathrm{for} \ w \in W_D^0(\bZ_p), |r_1(w)|_p = |b(w)|_p = 1. 
\end{align*}
Define
\begin{align*}
\psi (w  ) & = \sum_{n \neq 0} \phi_n(w). 
\end{align*}
Consider the embedding:
$$V_D(\bQ) \to W_D(\bQ)$$
defined by
\begin{align*}
\left( \left(  \begin{matrix} a &b\\
c & d   \end{matrix}\right) , \left(\begin{matrix} e &f \\
g & h  \end{matrix} \right) \right)  \mapsto
\left( \left(  \begin{matrix} 0 & 0 & a& b\\
0 & 0 & c & d\\
-a & -c & 0  & 0 \\
-b & -d & 0 & 0    \end{matrix}\right) , \left(\begin{matrix} 0 & 1 & e& f\\
-1 & 0 & g & h\\
-e & -g & 0  & 0 \\
-f & -h & 0 & 0   \end{matrix} \right) \right).
\end{align*}
Let $\hat{V}_D(\bQ)$ be the image of $V_D(\bQ)$ under this embedding. Define a mixed Eisenstein series
\begin{align*}
\Psi^0(s_0, s_1, g) = \sum_w \sum_{\gamma \in  T_H(\bQ) \backslash B^1(\bQ)} |\chi_0(g)|^{s_0} |\chi_1(g)|^{s_1}  \psi (w\gamma g)  \Phi_f^{S^2_v}(\gamma g_2) \overline{\Phi_f^{S^3_v}( \gamma g_3) },
\end{align*}
where $w$ is the images of $v$, the representative of $\SL_2(\bZ) \times \SL_2(\bZ) \times \SL_2(\bZ)$-orbit of $V_D(\bZ)$, chosen such that $w \in W^0_D(\bQ)$. 
Define a Shintani integral to be
\begin{align*}
&J^0(s_0, s_1, f, \psi) = \int_{B^1(\bQ) \backslash B^1(\bA) /T_H(\bA) } \Psi^0(s_0, s_1, g) dg.
\end{align*} 

\subsection{Application of the Poisson summation formula.}
We compute its residue at some special values. The next proposition shows the residue turns out to be exactly the square of sums over Heegner points again. 
\begin{theorem} \label{thm:residue}
The Shintani integral $J^0(s_0, s_1, f , \psi)$ has a simple pole at $s_0 =2$ and $s_1 = 1$ with the residue gives rise to the  square product of periods over Heegner points. More precisely, 
\begin{align*}
&  \mathrm{Res}_{s_0= 2, s_1 = 1} J^0(s_0, s_1,f, \psi) =  - \frac{c(J^0, J)}{ \zeta(2)} J(f, \phi),
\end{align*}
where $c(J^0, J) = \frac{2w^2(D)}{\pi^2}$ is a measure constant. 
\end{theorem}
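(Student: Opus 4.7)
The plan follows the Poisson-summation strategy of \cite{junwen15}, now adapted to the two-variable Shintani integral associated to the larger PVS of pairs of quaternary alternating $2$-forms. The key structural point is that $\hat V_D(\bQ)$ sits inside $W^0_D(\bQ)$ as the singular stratum carved out by $r_1=0$, so the poles of $J^0$ should correspond to contributions from this stratum, and their residues should match the integral $J(f,\phi)$ of Theorem \ref{thm:periods}.

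First I would unfold the mixed Eisenstein sum over $\gamma\in T_H(\bQ)\backslash B^1(\bQ)$ against the quotient $B^1(\bQ)\backslash B^1(\bA)/T_H(\bA)$, rewriting
\begin{align*}
J^0(s_0,s_1,f,\psi)=\int_{T_H(\bA)\backslash B^1(\bA)}|\chi_0(g)|^{s_0}|\chi_1(g)|^{s_1}\psi(wg)\Phi_f^{S^2_v}(g_2)\overline{\Phi_f^{S^3_v}(g_3)}\,dg.
\end{align*}
The Iwasawa decomposition of $B^1(\bA)$ then separates the integral into a Levi part, on which the characters $|\chi_0|^{s_0}$ and $|\chi_1|^{s_1}$ act as $\bR^\times$-characters and the Gaussians $\phi_n$ localize, and a unipotent part, on which the characters $\theta_{S^i_v}$ carried by $\Phi_f^{S^i_v}$ isolate the desired Fourier coefficients.

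Next I would apply the Poisson summation formula along the $r_1$-coordinate. The family $\psi=\sum_{n\neq 0}\phi_n$ is engineered so that the $n$-summation, combined with the Mellin transforms $\int_0^\infty e^{-\pi\tau t^2}t^{s-1}\,dt=\tfrac12(\pi\tau)^{-s/2}\Gamma(s/2)$ on the Levi split torus, produces a main term concentrated on $r_1=0$, i.e.\ on $\hat V_D$, plus non-main dual terms that are holomorphic near $(s_0,s_1)=(2,1)$. The pole at $(2,1)$ arises from these Mellin factors, while the unramified local computation at each finite place contributes $(1-p^{-2})$, which globally assembles into the factor $\zeta(2)^{-1}$. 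At this pole the $B^1(\bA)$-integration degenerates onto $G^1(\bA)$ and the restricted Schwartz function becomes $\phi$ on $V_D(\bA)$; after tracking the archimedean Gaussian constant $2/\pi^2$ and the cube-stabilizer factor $w(D)^2$ coming from Bhargava's parametrization of $G^1(\bZ)$-orbits on $V_D(\bZ)$, one recovers the constant $c(J^0,J)=2w(D)^2/\pi^2$, with the overall minus sign coming from the sign in the Mellin-dual transform.

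The main obstacle is the analytic justification of Poisson summation in this non-absolutely-convergent setting: one must show that the non-main dual terms are entire in a neighborhood of $(2,1)$ and that the residue is genuinely concentrated on the singular stratum. This requires uniform Gaussian estimates on $\phi_n$ as $n$, $\tau$, and the Levi coordinates vary, and a careful interchange of the $B^1(\bQ)$-sum with the Mellin integrations. Compared to \cite{junwen15}, the presence of the parabolic $P_{2,2}$ (rather than just a Borel) introduces the second character $\chi_0$, so the joint analytic continuation of the resulting two-variable Dirichlet-type series in $(s_0,s_1)$ must be established, either directly from the Poisson-dual expansion or by comparison with a degenerate Eisenstein series on $P_{2,2}$; once this is secured, the residue identity reduces to matching the two volume normalizations as indicated.
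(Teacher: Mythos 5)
Your proposal gets the overall Poisson-summation-plus-residue strategy right, and correctly identifies the singular stratum $\hat V_D$ and the minus sign from the dual transform, but it misses the mechanism for the second variable $s_1$ and consequently mis-attributes the $\zeta(2)^{-1}$ factor. In the paper's proof, the pole at $s_0=2$ and the pole at $s_1=1$ have genuinely different sources: Poisson summation handles only the $s_0$-direction (the torus coordinate paired with $|\chi_0|^{s_0}$, i.e.\ the $r_2$-entry that distinguishes $\hat V_D$ inside $W^0_D$), producing a simple pole at $s_0=2$ with residue $-\int\sum_{v\in V_D(\bZ)}|\chi_1(g)|^{s_1}\phi(vg)\Phi_f^{S^2_v}(g_2)\overline{\Phi_f^{S^3_v}(g_3)}\,dg$. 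At that stage the integrand still carries $|\chi_1(g)|^{s_1}$, and the paper then observes that integrating this character over $B_2(\bZ)\backslash B_2^+(\bR)$ is precisely the unfolded form of a completed nonholomorphic Eisenstein series $E^*(s_1,g_1)$ on $\SL_2(\bZ)\backslash\SL_2(\bR)$; the residue in $s_1$ is then the residue $\tfrac12$ of $E^*$ at $s_1=1$, and the $\zeta(2)^{-1}$ arises from dividing by the normalizing factor $\pi^{-(s_1-1)}\Gamma(s_1)\zeta(2s_1)$ evaluated at $s_1=1$. Your claim that ``the unramified local computation at each finite place contributes $(1-p^{-2})$, which globally assembles into the factor $\zeta(2)^{-1}$'' describes a different and incorrect mechanism: no such local density computation occurs here, and if you try to produce $\zeta(2)^{-1}$ that way you will not simultaneously account for the $s_1$-pole, which is needed for the double residue in the statement.

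A secondary issue: you say you would ``apply the Poisson summation formula along the $r_1$-coordinate.'' But $W_D$ is by definition the slice $r_1=0$ and elements of $\hat V_D(\bZ)$ already have $r_1=0$, so there is no $r_1$-summation to dualize; the relevant summation is over the lattice $\hat V_D(\bZ)$ with the scaling variable $y$ (coming from $T_H\backslash Z_H^1$) paired against $|\chi_0(g)|^{s_0}=y^{s_0-2}$, and the Poisson summation concerns that scaling. Your ``joint analytic continuation of the two-variable Dirichlet series'' concern is reasonable but is resolved in the paper not by estimating dual terms uniformly in both variables, but by decoupling: Poisson summation gives a meromorphic function of $s_0$ whose residue at $s_0=2$ is an absolutely convergent $G^1$-integral against $E^*(s_1,g_1)$, and meromorphy in $s_1$ then follows from the known meromorphy of $E^*$. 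Incorporating the degenerate $\GL_2$ Eisenstein series explicitly is the missing key lemma in your proposal.
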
 

\begin{proof}
Observe that 
\begin{align*}
&\int_{B^1(\bQ) \backslash B^1(\bA) /T_H(\bA)} \Psi^0(s_0, s_1, g) dg \\
& =  \frac{w^2(D)}{\pi^2} \cdot 2 \int_{y>0} \int
 \sum_{w \in \hat{V}_D(\bZ)}  y^{s_0-2} |\chi_1(g)|^{s_1} \psi(w g y)  \Phi_f^{S^2_v}(g_2) \overline{\Phi_f^{S^3_v}( g_3)}  dg d^{\times}y,
\end{align*}
where the integral is over the group 
$$\left( B_2(\bZ) \times \SL_2(\bZ) \times \SL_2(\bZ) \right) \backslash \left( B_2^{+}(\bR) \times \SL_2(\bR) \times \SL_2(\bR) \right).$$
Applying the classical Poisson summation formula, the integral has a simple pole at $s_0 = 2$ with residue
\begin{align*}
&- \int
\sum_{v \in V_D(\bZ)} |\chi_1(g)|^{s_1} \phi(v g ) \Phi_f^{S^2_v}(g_2) \overline{\Phi_f^{S^3_v}( g_3)}  dg .\\
\end{align*}
With the Eisenstein series, the above can be written as
\begin{align*}
&- \frac{1}{ \pi^{-(s_1-1)} \Gamma(s_1) \zeta(2s_1)} \cdot 2 \int_{[G^1(\bR)]} \sum_{v \in V_D(\bZ)} E^{\ast}(s_1, g_1)  \phi(v g)   \Phi_f^{S^2_v}(g_2) \overline{\Phi_f^{S^3_v}( g_3)}   dg, 
\end{align*}
where $[G^1(\bR)] = Z^1_G(\bR) G^1(\bZ) \backslash G^1(\bR)$. 
The Eisenstein series has residue $\frac{1}{2}$ at $s_1= 1$. Therefore, taking the residue at $s_1 = 1$, the above becomes
\begin{align*}
& \frac{-1}{ 2 \Gamma(1) \zeta(2)}  \int_{[G^1(\bR)]} \sum_{v \in V_D(\bZ)}   \phi(v g)   \Phi_f^{S^2_v}(g_2) \overline{\Phi_f^{S^3_v}( g_3)}  dg. 
\end{align*}
\end{proof}

By means of the functional equation of $E^{\ast}(s_1, g_1)$, we have
\begin{corollary} \label{corollary:residue}
The residue of Shintani integral $J^0(s_0, s_1, f, \psi)$ at $s_0=2$ can be expressed by 
\begin{align*}
\mathrm{Res}_{s_0=2} J^0(s_0, s_1, f, \psi) =\frac{ \pi^{s_1} \Gamma(1-s_1) \zeta(2(1-s_1))}{ \pi^{-(s_1-1)} \Gamma(s_1) \zeta(2s_1)} \mathrm{Res}_{s_0 =2}J^0(s_0, 1-s_1, f, \psi).
\end{align*}
\end{corollary}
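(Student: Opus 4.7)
The plan is to deduce the corollary directly from the intermediate formula established inside the proof of Theorem \ref{thm:residue}, by invoking the functional equation of the completed real Eisenstein series $E^{\ast}(s_1, g_1)$ on $\SL_2(\bR)$. Recall from that proof the identification
\begin{align*}
\mathrm{Res}_{s_0 = 2} J^0(s_0, s_1, f, \psi) = -\frac{2}{\pi^{-(s_1-1)} \Gamma(s_1) \zeta(2s_1)} \, I(s_1),
\end{align*}
where
\begin{align*}
I(s_1) = \int_{[G^1(\bR)]} \sum_{v \in V_D(\bZ)} E^{\ast}(s_1, g_1) \, \phi(vg) \, \Phi_f^{S^2_v}(g_2) \, \overline{\Phi_f^{S^3_v}(g_3)} \, dg.
\end{align*}
The only $s_1$-dependence of the integrand is through $E^{\ast}(s_1, g_1)$, since $\phi$, the Fourier coefficients $\Phi_f^{S^i_v}$ and the measure are all independent of $s_1$.

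The first step is to justify termwise substitution $s_1 \mapsto 1-s_1$ in the above expression. This requires checking that $I(s_1)$ extends to a meromorphic function of $s_1$, with no pole on the line $\mathrm{Re}(s_1) = 1/2$ outside of possibly $s_1 = 1/2$, so that both sides of the asserted functional equation make sense as identities of meromorphic functions. Since $E^{\ast}(s_1, g_1)$ has its standard set of poles (at $s_1 = 0, 1$) and the remaining factors of the integrand are rapidly decreasing (thanks to the Gaussian profile of $\phi$ in the archimedean variables and the cuspidality of $f$), the meromorphic continuation of $I(s_1)$ follows from that of $E^{\ast}$ itself.

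The key step is then the functional equation $E^{\ast}(s_1, g_1) = E^{\ast}(1-s_1, g_1)$ of the completed $\SL_2(\bR)$ Eisenstein series. Inserting this into the definition of $I(s_1)$ yields the symmetry $I(s_1) = I(1-s_1)$. Substituting this into the displayed expression for $\mathrm{Res}_{s_0 =2} J^0$ gives
\begin{align*}
\frac{\mathrm{Res}_{s_0 = 2} J^0(s_0, s_1, f, \psi)}{\mathrm{Res}_{s_0 = 2} J^0(s_0, 1-s_1, f, \psi)} = \frac{\pi^{-((1-s_1)-1)} \Gamma(1-s_1) \zeta(2(1-s_1))}{\pi^{-(s_1-1)} \Gamma(s_1) \zeta(2s_1)} = \frac{\pi^{s_1} \Gamma(1-s_1) \zeta(2(1-s_1))}{\pi^{-(s_1-1)} \Gamma(s_1) \zeta(2s_1)},
\end{align*}
which is exactly the claimed identity upon clearing the denominator.

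The main obstacle, and really the only genuine step beyond bookkeeping, is the convergence and meromorphic continuation required to turn the formal symmetry of $I(s_1)$ into an honest identity of meromorphic functions; but this is standard given that all extra data are Schwartz or cuspidal. Once that is granted, the corollary is essentially a repackaging of the functional equation of $E^{\ast}$, with the ratio of $\Gamma$- and $\zeta$-factors arising entirely from the asymmetry of the normalization prefactor in the formula for $\mathrm{Res}_{s_0=2} J^0$.
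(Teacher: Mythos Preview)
Your proof is correct and follows exactly the approach the paper intends: the paper's own justification is the single line ``By means of the functional equation of $E^{\ast}(s_1, g_1)$, we have'' preceding the corollary, and you have spelled this out by taking the intermediate expression for $\mathrm{Res}_{s_0=2} J^0$ from the proof of Theorem~\ref{thm:residue}, applying $E^{\ast}(s_1,g_1)=E^{\ast}(1-s_1,g_1)$ to get $I(s_1)=I(1-s_1)$, and reading off the ratio of the normalizing $\pi^{-(s_1-1)}\Gamma(s_1)\zeta(2s_1)$ factors. Your remarks on convergence and meromorphic continuation are a welcome addition that the paper leaves implicit.
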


\section{Central value of L-function}
\subsection{Invariant Functional}
Recall that we may associate to a pair of elements $f \in \pi$ and $\tilde{f} \in \tilde{\pi}$ a smooth function $\Phi_{f,\tilde{f}}(g) = \langle\pi(g)f, \tilde{f}\rangle$ on $G$, called a matrix coefficient of $\pi$.  The following integral 
\begin{align*}
\int_{\bA^{\times} \backslash T(\bA)}  \int_{U(\bA)} \Phi_{f,\tilde{f}}(gu) \theta_S^{-1}(u) du dg
\end{align*}
decomposes into an Euler factorization with 
\begin{align*}
\int_{\bQ_v^{\times} \backslash T(\bQ_v)}  \int_{U_v} \Phi_{f,\tilde{f}}(gu) \theta_S^{-1}(u) du dg \sim  \frac{\zeta_v(2) \zeta_v(4) L_v\left( \frac{1}{2}, \pi  \right)}{L_v\left( 1, \pi , \mathrm{Ad}\right) L_v(1, \chi_{K/F})  }.
\end{align*}
Therefore, in order to prove the conjecture, we need to prove that 
\begin{align*}
\int_{\bA^{\times} \backslash T(\bA)}  \int_{U(\bA)} \Phi_{f,\tilde{f}}(gu) \theta_S^{-1}(u) du dg  \sim |\mathcal{P}(f, 1)|^2.
\end{align*}

\begin{theorem}
The residue of Shintani integral $J^0(s_0, s_1, f, \psi)$ at $s_0=2$ and $s_1 = 1$ can be evaluated by 
\begin{align*}
- \frac{ L(\phi)}{\pi^2} \cdot  C  \int_{\bA^{\times} \backslash T(\bA)}  \int_{U(\bA)} \Phi_{f,\tilde{f}}(gu) \theta_S^{-1}(u) du dg, \ \mathrm{as} \ \tau \to \infty, 
\end{align*}
where the constant 
\begin{align*}
C = \frac{||f||^2  \mathrm{vol}(K^{\times} \bA^{\times} \backslash \bA_{K}^{\times})}{\zeta(2)} .
\end{align*}
\end{theorem}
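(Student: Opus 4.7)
The plan is to re-evaluate the residue of $J^0(s_0,s_1,f,\psi)$ through a second unfolding that exploits the single-orbit structure of the PVS $(H^1,W_D^0)$ adelically. Whereas the proof of Theorem \ref{thm:residue} unfolded from the $V$-side to realize the residue as $J(f,\phi)$ and hence as a square of Bessel periods, here I would unfold from the $W$-side to express the same residue as a matrix coefficient integral; comparing the two formulations (in combination with Theorem \ref{thm:periods}) is what ultimately furnishes the refined Gan-Gross-Prasad identity.

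First I would unfold the double sum defining $\Psi^0$ against the rational stabilizer of a chosen base point $w_0 \in \hat V_D(\bQ) \subset W_D^0(\bQ)$. Because $(H^1,W_D^0)$ has essentially one rational orbit, $\sum_w \sum_{\gamma \in T_H(\bQ)\backslash B^1(\bQ)} \psi(w\gamma g)(\ldots)$ collapses to a single term over $\Stab_{B^1(\bQ)}(w_0)\backslash B^1(\bQ)$. The key asymmetry is that the rational stabilizer of $w_0$ is only the unipotent radical of $\GL_2(\bQ)$, whereas the adelic stabilizer contains a full $\GL_2(\bA)$. Unfolding the integral over $B^1(\bQ)\backslash B^1(\bA)/T_H(\bA)$ against this disparity introduces extra integrations over $\bA^\times\backslash T(\bA)$ and $U(\bA)$, precisely the Bessel-group domain.

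Second I would identify the integrand. The product $\Phi_f^{S_v^2}(\gamma g_2)\overline{\Phi_f^{S_v^3}(\gamma g_3)}$ is a double Fourier integral over $U\times U$; Lemma 2.1 lets me absorb the $\GL_2(\bQ)$-translates so that the two characters $\theta_{S_v^2}^{-1},\theta_{S_v^3}$ at the base point combine into a single $\theta_S^{-1}$ under the stabilizer-induced change of variables. Taking the Petersson pairing along $Z(\bA)G(\bQ)\backslash G(\bA)$ collapses the two copies of $\Phi_f$ into the matrix coefficient $\Phi_{f,\tilde f}$, producing a factor $\|f\|^2$, while the adelic torus quotient contributes $\mathrm{vol}(K^\times\bA^\times\backslash\bA_K^\times)$. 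The resulting integrand is $\|f\|^2\,\mathrm{vol}(K^\times\bA^\times\backslash\bA_K^\times)\int_{\bA^\times\backslash T(\bA)}\int_{U(\bA)}\Phi_{f,\tilde f}(gu)\theta_S^{-1}(u)\,du\,dg$. Taking the residue and $\tau\to\infty$ asymptotic then produces the announced formula: the quasi-characters $|\chi_0|^{s_0}|\chi_1|^{s_1}$ become one-variable zeta integrals in the stabilizer direction, Poisson at $s_0=2$ produces the simple pole as in Theorem \ref{thm:residue}, the residue at $s_1=1$ of the normalized Eisenstein component yields the factor $\zeta(2)^{-1}$ in accord with Corollary \ref{corollary:residue}, and the $\tau\to\infty$ limit isolates $L(\phi)$ exactly as in the proof of Theorem \ref{thm:periods}, combining to $-L(\phi)C/\pi^2$ with $C=\|f\|^2\,\mathrm{vol}(K^\times\bA^\times\backslash\bA_K^\times)/\zeta(2)$.

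The principal obstacle is the second step: showing that the unfolded product of two Bessel Fourier coefficients genuinely assembles into a \emph{single} matrix coefficient integral along the Bessel subgroup, rather than a diagonal of two independent Bessel periods. This requires a place-by-place matching of the local data $S_v^2,S_v^3$ with the reference $S$ and an appeal to the Whittaker-Shintani machinery of Ichino-Ikeda and Liu to identify the resulting local orbit integrals with $\alpha(f_v,\tilde f_v;1)$. The single-orbit property of $W_D^0$ is what drives this Euler factorization; extracting the precise constant $\zeta(2)^{-1}$ together with the idele-class volume factor then reduces to careful bookkeeping with the Haar measures fixed in the PVS section.
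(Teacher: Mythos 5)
Your overall plan matches the paper's: exploit the single rational $B^1(\bQ)$-orbit in $W^0_D(\bQ)$, unfold the Shintani integral to a single orbital integral, then identify the result with a matrix coefficient integral over the Bessel domain. But the mechanism you give for how the torus $T(\bA)$ enters is not the one that actually works, and two essential intermediate steps are missing.

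First, you assert that the rational stabilizer of the base point is the unipotent radical of $\GL_2(\bQ)$ while the adelic stabilizer is a full $\GL_2(\bA)$, and that unfolding ``against this disparity'' directly produces the $\bA^\times\backslash T(\bA)$ and $U(\bA)$ integrations. That conflates the stabilizer under the full group $H$ (which is indeed unipotent rationally, $\GL_2$ adelically, as noted in the overview) with the stabilizer under $B^1$, which is the group actually acting here. In $B^1(\bA)$ the stabilizer of the explicit base point $w$ is isomorphic to the Borel $B_2(\bA) \subset \GL_2(\bA)$; it is neither unipotent nor all of $\GL_2$. Unfolding against it yields an integral over $B_2(\bA)\backslash B^1(\bA)$ together with an inner integral over $\bA^\times\backslash B_2(\bA)$ — no torus integral appears at this stage. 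The $T(\bA)$-integration is introduced by a genuinely separate argument: at each finite place one counts $B^1(\bZ_p)$-orbits in $W^0_D(\bZ_p)$ with fixed unit invariants, finds that the count equals the number of solutions of $x^2\equiv D\pmod{4|y_2|_p^{-1}}$, and recognizes the generating series of these counts as a $T(\bQ_p)$-orbital integral. Summing these local contributions is what converts the $B_2$-inner integral into an integral over $\bA^\times\backslash B_2(\bA)\times\bA^\times\backslash T(\bA)$. This local orbit counting is the heart of the proof, and your proposal does not contain it.

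Second, the final identification of
\begin{align*}
\int_{\bA^{\times}\backslash B_2(\bA)}\int_{\bA^{\times}\backslash T(\bA)}\Phi_f^S(g_3 t)\overline{\Phi_f^S(g_3)}\,dt\,dg_3
\end{align*}
with the Bessel matrix-coefficient integral is not achieved by a Petersson pairing that ``collapses the two copies of $\Phi_f$.'' In the paper this is done by first writing the matrix coefficient integral as an integral over $T(\bA)\backslash\GL_2(\bA)$ with a $\theta_S^{-1}$-twist, then defining $f_t$ as a compactly supported function on $\mathrm{M}_2^{\mathrm{sym}}$ and applying Fourier inversion there; the $\GL_2(\bA)$-integral against $\theta_S$ picks out exactly the point $\xi=S$, reducing the $T\backslash\GL_2$-integral back down to $\bA^\times\backslash B_2(\bA)$. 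This Fourier inversion step, not a Petersson pairing, is what makes the unfolded product of two Fourier coefficients a \emph{single} Bessel-type matrix coefficient integral rather than a product of two independent Bessel periods. You flag this passage yourself as ``the principal obstacle'' and gesture toward Whittaker--Shintani machinery, but that machinery is used later, in the Euler factorization of the resulting global integral, not in this step. Without the Fourier inversion argument (and the constant $c = \mathrm{vol}(K^\times\bA^\times\backslash\bA_K^\times)\zeta(4)$ it produces) the proof is incomplete, and the source of the $\|f\|^2$ and $\zeta(2)^{-1}$ in $C$ cannot be pinned down.
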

\begin{proof}
The key observation is that the action $B^1(\bQ)$ on $W^0_D(\bQ)$ has a single orbit represented by $w$
\begin{align*}
 \left( \left(   \begin{matrix}  0 & 0 & 0 & 1 \\
0 & 0 & -1 & 0  \\
0 & 1 & 0 & 0 \\
-1 & 0 & 0& 0\\
  \end{matrix} \right),  \left(   \begin{matrix}  0 & 1 & 1 & 0 \\
-1 & 0 & 0 & -\frac{D}{4}  \\
0 & 0 & 0 & 0 \\
0 & 0 & 0 & 0\\
  \end{matrix} \right) \right) \ \ \mathrm{if} \  D \equiv 0 \ (\mathrm{mod} \ 4), 
\end{align*}
or 
\begin{align*}
 \left( \left(   \begin{matrix}  0 & 0 & 0 & 1 \\
0 & 0 & -1 & -1  \\
0 & 1 & 0 & 0 \\
-1 & 0 & 0& 0\\
  \end{matrix} \right),  \left(   \begin{matrix}  0 & 1 & 1 & 1 \\
-1 & 0 & -1 & -\frac{(D+3)}{4}  \\
0 & 0 & 0 & 0 \\
0 & 0 & 0 & 0\\
  \end{matrix} \right) \right) \ \ \mathrm{if} \  D \equiv 1 \ (\mathrm{mod} \ 4). 
  \end{align*}
Unfolding the sum over rational orbits in $W^0_D(\bQ)$, it reduces to a single orbital integral
\begin{align*}
\int_{ T_H(\bA) \backslash B^1(\bA)}  |\chi_0(g)|^{s_0} |\chi_1(g)|^{1-s_1}  \psi (w g)  \Phi_f^{S}( g_2) \overline{\Phi_f^{S}( g_3) }dg.
\end{align*}
Another key point is the stabilizer group of $w$ in $B^1(\bA)$ is generated by
\begin{align*}
 \left( \begin{matrix}  a_3 & 0 \\
0 & a_3^{-1} \end{matrix} \right) \times \pm  \left( \begin{matrix} a_3 & b_3 & a_3 b_3 & b_3^2 - \frac{D}{4} \\
0 & 1  & -a_3 & -b_3 \\
0 & 0 & a_3 & b_3 \\
0& 0 & 0 & 1 \end{matrix} \right) \ \ \mathrm{if} \  D \equiv 0 \ (\mathrm{mod} \ 4), 
\end{align*}
or 
\begin{align*}
 \left( \begin{matrix}  a_3 & 0 \\
0 & a_3^{-1} \end{matrix} \right) \times \pm  \left( \begin{matrix} a_3 & b_3 & - a_3 + a_3 b_3 & b_3^2 - \frac{(D+3)}{4} \\
0 & 1  & -a_3 & -1-b_3 \\
0 & 0 & a_3 & b_3 \\
0& 0 & 0 & 1 \end{matrix} \right) \ \ \mathrm{if} \  D \equiv 1 \ (\mathrm{mod} \ 4), 
\end{align*}
where $a_3, b_3 \in \bA$, so it is isomorphic to the Borel subgroup $B_2(\bA)$ of $\GL_2(\bA)$.  
Therefore, integrating over stabilizer group, the integral valued at $s_1 = 1$ becomes
\begin{align*}
 \int_{ B_2(\bA) \backslash B^1(\bA)}  |\chi_0(g)|^{s_0}  \psi (w g)  \int_{\bA^{\times} \backslash B_2(\bA)} \Phi_f^{S}( g_3 g_2) \overline{\Phi_f^{S}( g_3) } dg_3 dg.
\end{align*}
Now at finite place the integral reduces to the orbit counting. Write
\begin{align*}
\left(  \left(   \begin{matrix}  0 & 0 & 0 & y_1  \\
0 & 0 & y_1y_2  & d  \\
0 & -y_1y_2  & 0 & 0 \\
-y_1 & -d & 0& 0\\
  \end{matrix} \right),  \left(   \begin{matrix}  0 & t & e  & f \\
-t & 0 & g& h  \\
-e& -g& 0 & 0 \\
-f & -h & 0& 0\\
  \end{matrix} \right) \right).
\end{align*}
Let us ask the question: how many orbits in $W^0_D(\bZ_p)$
such that $t \in \bZ_p^{\times}$, $y_1 \in \bZ_p^{\times}$ and $ y_1y_2 \in \bZ_p$.
The answer is: 
the number of orbits is equal to the number of solutions of 
$x$ such that $x^2 = D \ \mathrm{mod} \ 4|y_2|_p^{-1}$. Therefore, after adding local orbit integrals, the integral becomes  
\begin{align*}
 \int_{ B_2(\bA) \backslash B^1(\bA)}  |y_{\infty}|^{s_0}  \psi_0 (w g)  \int_{\bA^{\times} \backslash B_2(\bA)} \int_{\bA^{\times} \backslash T(\bA)} \Phi_f^{S}( g_3 t) \overline{\Phi_f^{S}( g_3) } dg_3 dg
\end{align*}
dividing the measure constant $\left(\frac{\mathrm{vol}(K^{\times} \bA^{\times} \backslash \bA_{K}^{\times})}{h_{K}}\right)^2$. Applying the classical Poisson summation formula, the integral has a simple pole at $s_0 = 2$ with residue
\begin{align*}
\frac{1}{2} \cdot \frac{8 L(\phi)}{4 \pi^3 } \cdot 2 \int_{\bA^{\times} \backslash B_2(\bA)} \int_{\bA^{\times} \backslash T(\bA)} \Phi_f^{S}( g_3 t) \overline{\Phi_f^{S}( g_3) } dg_3 dg, 
\end{align*}
as $\tau \to \infty$. 
The rest of proof is showing that 
\begin{align*}
 \int_{\bA^{\times} \backslash B_2(\bA)} \int_{\bA^{\times} \backslash T(\bA)} \Phi_f^{S}( g_3 t) \overline{\Phi_f^{S}( g_3) } dt dg_3 
 \end{align*}
 is equal to
 \begin{align*}
\int_{\bA^{\times} \backslash T(\bA)}  \int_{U(\bA)} \Phi_{f,\tilde{f}}(tu) \theta_S^{-1}(u) du dt
 \end{align*}
 up to a specified constant.
 First the above integral can be written as 
 \begin{align*}
 \int_{U(\bA)} \Phi_{f,\tilde{f}}(tu) \theta_S^{-1}(u) du =  c \cdot \int_{T(\bA) \backslash \GL_2(\bA)} \int_{U(\bA)} \Phi_f^{S}( g t u) \overline{\Phi_f^{S}( g) }  \theta_S^{-1}(u)   du,
 \end{align*}
 where
 \begin{align*}
c= \mathrm{vol}(K^{\times} \bA^{\times} \backslash \bA_{K}^{\times})  \frac{\mathrm{vol}(Z(\bA) \mathrm{GSp}_4(\bQ) \backslash \mathrm{GSp}_4(\bA))}{\mathrm{vol}(Z(\bA) \GL_2(\bQ) \backslash \GL_2(\bA))} = \mathrm{vol}(K^{\times} \bA^{\times} \backslash \bA_{K}^{\times}) \zeta(4). 
 \end{align*}
Let $f_t$ be the function on $\mathrm{M}_2^{\mathrm{sym}}$ defined by 
\begin{align*}
f_t(\xi g) = \int_{ \bA^{\times} \backslash B_2(\bA)} \Phi_f^{S}( p g  t) \overline{\Phi_f^{S}( p g) } dp,
\end{align*}
where $\xi = S$.
Extending by $0$, we view $f_t$ as a function in $C_c^{\infty}(\mathrm{M}_2^{\mathrm{sym}})$. We write $\langle\cdot, \cdot \rangle$ for the correspondence pairing on $\mathrm{M}_2^{\mathrm{sym}} \times \mathrm{M}_2^{\mathrm{sym}}$. Then
\begin{align*}
&  \int_{  T(\bA) \backslash  \GL_2(\bA)} \int_{U(\bA)} \Phi_f^{S}( g t u) \overline{\Phi_f^{S}( g ) }  \theta_S^{-1}(u)   du 
\end{align*}
is equal to 
\begin{align*}
  & \int  f_t(\xi g) \theta\langle \xi g, u \rangle dg \theta\langle \xi, u \rangle^{-1} du, 
\end{align*}
which by Fourier inversion is
\begin{align*}
f_t(\xi ) = \int_{ \bA^{\times} \backslash B_2(\bA)} \Phi_f^{S}( p  t) \overline{\Phi_f^{S}( p ) } dp.
\end{align*}
This completes the proof of the theorem.
\end{proof}

\subsection{Global L-function}
In the last proof we have shown that two global integrals are related
\begin{align*}
\int_{\bA^{\times} \backslash B_2(\bA)} \int_{\bA^{\times} \backslash T(\bA)} \Phi_f^{S}( g_3 t) \overline{\Phi_f^{S}( g_3) } dt dg_3 \sim 
\int_{\bA^{\times} \backslash T(\bA)}  \int_{U(\bA)} \Phi_{f,\tilde{f}}(tu) \theta_S^{-1}(u) du 
\end{align*}
up to a constant. 
We have to show the integral 
\begin{align*}
\int_{\bA^{\times} \backslash T(\bA)}  \int_{U(\bA)} \Phi_{f,\tilde{f}}(tu) \theta_S^{-1}(u) du 
\end{align*}
is related to the expected central value of L-function. To this end, we consider a family of integrals 
\begin{align*}
\int_{\bA^{\times} \backslash B_2(\bA)} \int_{\bA^{\times} \backslash T(\bA)}  \lambda_s(t) \Phi_f^{S}( g_3 t) \overline{\Phi_f^{S}( g_3) } dt dg_3,
\end{align*}
where the continuous homomorphism from $B_2(\bA)$ to $\bC^{\times}$ is defined as follows
\begin{align*}
\lambda_s(b): \left( \begin{matrix}
b_1& \ast \\
0 & b_2
\end{matrix} \right) \mapsto |b_1|^s |b_2|^{-s},
\end{align*}
and extends to $\GL_2(\bA)$. 
Now we explain how this deformation of integrals affects the Fourier coefficients of matrix coefficients. Let us consider the unramified situation. Choose a place $v$ of $F$ that satisfies six conditions prior to the \cite[Theorem 2.2]{liu}. Such places are called $\emph{good}$. 
Let $\pi_v =\mathrm{I}(\Xi) =  \mathrm{Ind}_B^G(\Xi)$ be an unramified principal series of $G = \mathrm{PGSp}_4$. Let $\Phi_{\Xi} = \Phi_{f_v, \tilde{f}_v}$ be the spherical matrix coefficient of $\pi_v$, where $f_v \in \pi_v$
is the spherical vector. Denote $\Xi_s = |\det|^s \cdot \Xi$. Then our proof in last section shows that 
\begin{align*}
\int_{\bA^{\times} \backslash B_2(\bA)} \int_{\bA^{\times} \backslash T(\bA)}  \lambda_s(t) \Phi_f^{S}( g_3 t) \overline{\Phi_f^{S}( g_3) } dt dg_3
\end{align*}
is equal to
\begin{align*}
\int_{\bA^{\times} \backslash T(\bA)}  \int_{U(\bA)} \Phi_{f_s,\tilde{f}_s}(tu) \theta_S^{-1}(u) du 
\end{align*}
up to the same constant, where $f_{s, v} \in  \mathrm{Ind}_{B_v}^{G_v}(\Xi_s)$ at good places. Assume the real part of $s$, $\mathrm{Re}(s)$, is large. Then the proof in \cite[Section 3.3, Appendices]{liu} shows that 
\begin{align*}
\int_{\bA^{\times} \backslash T(\bA)}  \int_{U(\bA)} \Phi_{f_s,\tilde{f}_s}(tu) \theta_S^{-1}(u) du 
\end{align*} 
is eulerian, and is equal to
\begin{align*}
\frac{\zeta(2) \zeta(4)L(s + \frac{1}{2}, \pi)}{L(2s +1,  \pi, \mathrm{Ad}) L(1, \chi_{K/F})} \prod_v \left(\frac{\zeta_v(2) \zeta_v(4)L_v(s + \frac{1}{2}, \pi)}{L_v(2s +1, \pi, \mathrm{Ad}) L_v(1, \chi_{K/F})} \right)^{-1}\alpha(f_{v,s}, \tilde{f}_{v, s}),
\end{align*} 
where 
\begin{align*}
\alpha(f_{v,s}, \tilde{f}_{v, s}) = \int_{\bQ_v^{\times} \backslash T(\bQ_v)}  \int_{U(\bQ_v)} \Phi_{f_{v,s},\tilde{f}_{v,s}}(tu) \theta_S^{-1}(u) du.
\end{align*}
Those local $L$-values follow from \cite[Theorem 4.5]{GPSR}.
Therefore the value at $s = 0$ produces the expected central value of $L$-function.

\bibliography{BCWCV}

\end{document}